\documentclass[12pt]{article}
\usepackage{amsmath,amssymb,amsbsy,amsfonts,amsthm,latexsym,amsopn,amstext,
                                                    amsxtra,euscript,amscd}
\begin{document}

\newtheorem{theorem}{Theorem}
\newtheorem{lemma}[theorem]{Lemma}
\newtheorem{claim}[theorem]{Claim}
\newtheorem{cor}[theorem]{Corollary}
\newtheorem{prop}[theorem]{Proposition}
\newtheorem{definition}{Definition}
\newtheorem{question}[theorem]{Open Question}
\newtheorem{conj}[theorem]{Conjecture}
\newtheorem{prob}{Problem}
\newtheorem{algorithm}[theorem]{Algorithm}

\def\squareforqed{\hbox{\rlap{$\sqcap$}$\sqcup$}}
\def\qed{\ifmmode\squareforqed\else{\unskip\nobreak\hfil
\penalty50\hskip1em\null\nobreak\hfil\squareforqed
\parfillskip=0pt\finalhyphendemerits=0\endgraf}\fi}

\def\cA{{\mathcal A}}
\def\cB{{\mathcal B}}
\def\cC{{\mathcal C}}
\def\cD{{\mathcal D}}
\def\cE{{\mathcal E}}
\def\cF{{\mathcal F}}
\def\cG{{\mathcal G}}
\def\cH{{\mathcal H}}
\def\cI{{\mathcal I}}
\def\cJ{{\mathcal J}}
\def\cK{{\mathcal K}}
\def\cL{{\mathcal L}}
\def\cM{{\mathcal M}}
\def\cN{{\mathcal N}}
\def\cO{{\mathcal O}}
\def\cP{{\mathcal P}}
\def\cQ{{\mathcal Q}}
\def\cR{{\mathcal R}}
\def\cS{{\mathcal S}}
\def\cT{{\mathcal T}}
\def\cU{{\mathcal U}}
\def\cV{{\mathcal V}}
\def\cW{{\mathcal W}}
\def\cX{{\mathcal X}}
\def\cY{{\mathcal Y}}
\def\cZ{{\mathcal Z}}

\def\fI{{\mathfrak I}}
\def\fJ{{\mathfrak J}}

\def\MNL{{\mathfrak M}(N;K,L)}
\def\VNL{V_m(N;K,L)}
\def\RNL{R(N;K,L)}

\def\MNm{{\mathfrak M}_m(N;K)}
\def\VNm{V_m(N;K)}

\def\Xm{\cX_m}

\def \C {{\mathbb C}}
\def \F {{\mathbb F}}
\def \L {{\mathbb L}}
\def \K {{\mathbb K}}
\def \Q {{\mathbb Q}}
\def \Z {{\mathbb Z}}

\def\barG{\overline{\cG}}
\def\\{\cr}
\def\({\left(}
\def\){\right)}
\def\fl#1{\left\lfloor#1\right\rfloor}
\def\rf#1{\left\lceil#1\right\rceil}

\newcommand{\pfrac}[2]{{\left(\frac{#1}{#2}\right)}}

\def\rem{\mathrm{\, rem~}}

\def \Prob{{\mathrm {}}}
\def\e{\mathbf{e}}
\def\ep{{\mathbf{\,e}}_p}
\def\epp{{\mathbf{\,e}}_{p^2}}
\def\er{{\mathbf{\,e}}_r}
\def\eM{{\mathbf{\,e}}_M}
\def\eps{\varepsilon}
\def\ord{\mathrm{ord}\,}
\def\vec#1{\mathbf{#1}}

\def \li {\mathrm {li}\,}

\def\mand{\qquad\mbox{and}\qquad}

\newcommand{\comm}[1]{\marginpar{%
\vskip-\baselineskip 
\raggedright\footnotesize
\itshape\hrule\smallskip#1\par\smallskip\hrule}}

\title{Fermat quotients:\\ Exponential sums, value set and primitive roots}

\author{{\sc Igor E.~Shparlinski} \\
{Department of Computing, Macquarie University} \\
{Sydney, NSW 2109, Australia} \\
{\tt igor.shparlinski@mq.edu.au}}

\date{\today}

\maketitle

\begin{abstract} For a prime $p$ and an integer $u$ with $\gcd(u,p)=1$, we define 
Fermat  quotients by the conditions
$$
q_p(u) \equiv \frac{u^{p-1} -1}{p} \pmod p, \qquad 0 \le q_p(u) \le p-1.
$$
D.~R.~Heath-Brown has given a bound of exponential sums 
with $N$ consecutive Fermat quotients that is nontrivial for $N\ge p^{1/2+\varepsilon}$
for any fixed $\varepsilon>0$. We use a recent idea of M.~Z.~Garaev together with a form
of the large sieve inequality due to S.~Baier and L.~Zhao, 
to show that  on average over $p$
one can obtain a nontrivial estimate for much shorter 
sums starting with $N\ge p^{\varepsilon}$. We also obtain lower bounds
on the  image size of the first $N$ consecutive Fermat quotients and use 
it to prove that there is a  positive integer  $n\le p^{3/4 + o(1)}$ 
such that $q_p(n)$ is a primitive root
modulo $p$. \end{abstract}

\paragraph{Keywords:} \ Fermat quotient, exponential and character sum,
large sieve inequality, primitive root

\paragraph{Mathematics Subject Classification (2010):} 
\  11A07, 11L40, 11N35

\section{Introduction}

\subsection{Previous results}

For a prime $p$ and an integer $u$ with $\gcd(u,p)=1$ 
the {\it Fermat quotient\/} $q_p(u)$ is defined as the unique integer
with 
$$
q_p(u) \equiv \frac{u^{p-1} -1}{p} \pmod p, \qquad 0 \le q_p(u) \le p-1.
$$

Fermat quotients appear and play a very important role in a
variety of problems, see~\cite{ErnMet,Gran1,Gran2,Ihara,Len,OstShp} 
and references therein. 

In particular, Heath-Brown~\cite{H-B} has considered the exponential sums
$$
S_p(a;N) = \sum_{\substack{n=1\\gcd(n,p)=1}}^N \ep(aq_p(n)), \qquad a \in \Z,
$$
where  for an integer $r\ge 1$ and a real $z$ we define $\er(z) = \exp(2 \pi i a z/r)$, and noticed that 
using the P{\'o}lya-Vinogradov and Burgess bounds 
(see~\cite[Theorems~12.5 and~12.6]{IwKow}) leads to the 
estimate 
\begin{equation}
\label{eq:HB bound}
\max_{\gcd(a,p)=1} |S_p(a;N)| \le N^{1-1/\nu}p^{(\nu+1)/2\nu ^2+o(1)}
\end{equation} 
with any fixed integer $\nu \ge 1$ (in fact in~\cite[Theorem~2]{H-B} it is presented only 
with $\nu =2$ but the argument applies to any $\nu$, see also~\cite[Section~4]{ErnMet}). 
It is easy to see that~\eqref{eq:HB bound} is nontrivial for  $N \ge p^{1/2+\varepsilon}$
with an arbitrary fixed $\varepsilon > 0$, 
but  becomes trivial for $N \le p^{1/2}$. 
For longer intervals of length $N \ge p^{1 +\varepsilon}$, 
a nontrivial  bound of exponential sums with
linear combinations of $s\ge 1$ consecutive values 
$q_p(u),\ldots,q_p(u+s-1)$ has been given in~\cite{OstShp},
see also~\cite{COW} for a generalisation. 
Several one-dimensional and bilinear multiplicative 
character sums, as well as sums over primes, 
with Fermat quotients have recently been estimated in~\cite{Shp1,Shp2}. 
In particular, by~\cite[Theorem~3.1]{Shp1}, for any nontrivial 
multiplicative character $\eta$  modulo $p$,  we have
\begin{equation}
\label{eq:char sum bound}
\left|\sum_{\substack{n=1\\gcd(n,p)=1}}^N \eta(q_p(n))\right| \le 
 N^{1-1/\nu}p^{(5\nu+1)/4\nu ^2+o(1)}
\end{equation} 
with any fixed integer $\nu \ge 1$. 
Furthermore,  Gomez and Winterhof~\cite{GomWint} have
estimated  multiplicative 
character sums with the $s$-fold products $q_p(n+d_1)\ldots q_p(n+d_s)$ 
over intervals of length  $N \ge p^{3/2 + \varepsilon}$
for any fixed $\varepsilon>0$. In turn, the bound of~\cite{GomWint} has
been used by Aly and Winterhof~\cite{AlyWint} to study some Boolean 
functions associated with Fermat quotients. 

The image size
$$
I_p(N) = \# \{q_p(n)\ : \ 1 \le n \le N, \ \gcd(n,p) = 1\}
$$
of the first $N$ Fermat quotients, has also been studied. 
We note that since $q_p(1)=0$, the smallest $N$
with $I_p(N)>1$, that is frequently denoted by $\ell_p$, corresponds to
the smallest nonzero Fermat quotient and has been studied in a
number of works, see~\cite{BFKS,Gran1,Gran2,Ihara,Len}. 
It is also 
shown in~\cite{Shp3} that for any fixed $\varepsilon > 0$ and sufficiently 
large $p$, we have $I_p(N) = p$ for $N \ge p^{463/252+\varepsilon}$
and  $I_p(N) = p + o(p)$ for $N \ge p^{3/2+\varepsilon}$. 

Most of these above results depend on   the following
well-known property of Fermat quotients:
\begin{equation}
\label{eq:add struct}
q_p(uv) \equiv q_p(u) + q_p(v) \pmod p, 
\end{equation}
see, for example,~\cite[Equation~(3)]{ErnMet}, which is also used 
here.

\subsection{Our results}

Here we show that on average over primes $p$ one can obtain a stronger bound 
than~\eqref{eq:HB bound}, which is
nontrivial for  $N \ge p^{\varepsilon}$. This result is based on a combination 
of the idea of Heath-Brown~\cite{H-B} to interpret the sums $S_p(a;N)$ as 
multiplicative character sums with the approach of Garaev~\cite{Gar} of 
estimating the maximal value of such sums via the large sieve inequality.
However, here instead of the classical form of the large sieve inequality,
used in~\cite{Gar}, we use the version of Baier and Zhao~\cite{BaZha2}, where 
the averaging is taken over square moduli. Our proof follows quite close to that 
of~\cite[Theorem~3]{Gar} however we allow the length of the corresponding sums
to vary with the modulus. In fact our argument gives
a bound on the sums with arbitrary primitive characters modulo $p^2$, not necessary 
of order $p$ as in the case of Fermat quotients, 
see~\eqref{eq:Gen Bound} below. 

 However here  we use some ideas of~\cite{OstShp}  
to obtain new lower bounds on the image size $I_p(N)$.
More precisely, we study the values of $N$ for which $I_p(N)$ grows as a power of $p$.
So our results somewhat interpolate between those of~\cite{BFKS},
where the case $I_p(N)>1$ has been studied,  and of~\cite{Shp3} addressing 
case of very large values of $I_p(N)$. In turn these estimates are  used
to estimate the smallest primitive root in the sequence of Fermat quotients.

We note that the bound~\eqref{eq:char sum bound},
taken with a sufficiently large $\nu$, implies that there exists
a positive integer $n \le p^{5/4 + o(1)}$  such that $q_p(n)$ is a
primitive root modulo $p$. 
Here we use our lower bounds  on $I_p(N)$ and also  bounds of double 
character sums to improve this estimate (and replace exponent $5/4$ 
with $3/4$).


\subsection{Notation}

Throughout the paper,  $p$  always denotes a  prime 
number, while  $k$, $m$ and $n$ (in both the upper and
lower cases) denote positive integer 
numbers. 

The implied constants in the symbols `$O$',  `$\ll$' 
and `$\gg$'  
may occasionally depend on the integer parameter $\nu\ge 1$
and the t real parameter $\eps > 0$, 
and are absolute otherwise.  
We recall that the notations $U = O(V)$,  $U \ll V$  and $V \gg U$ are all
equivalent to the assertion that the inequality $|U|\le cV$ holds for some
constant $c>0$.

Finally, the notation $z\sim Z$ means that $z$ must satisfy the inequality
$Z< z\leq 2Z$.

\section{Preparations}

\subsection{Basics on exponential and character sums}

We recall, that for any integers $z$ and  $r \ge 1$,  we have 
the orthogonality relation
\begin{equation}
\label{eq:Orth}
\sum_{-r/2 \le b < r/2} \er(bz) = \left\{\begin{array}{ll}
r,&\quad\text{if $z\equiv 0 \pmod r$,}\\
0,&\quad\text{if $z\not\equiv 0 \pmod r$,}
\end{array}
\right.
\end{equation}
see~\cite[Section~3.1]{IwKow}. 

We also need the bound
\begin{equation}
\label{eq:Incompl}
\sum_{n=K+1}^{K+L} \er(bn) \ll  \min\left\{L, \frac{r}{|b|}\right\},
\end{equation}
which holds for any integers  $b$, $K$ and $L\ge 1$ with $0 < |b| \le r/2$,
see~\cite[Bound~(8.6)]{IwKow}.

We also refer to~\cite[Chapter~3]{IwKow} for a background on multiplicative characters.

The link between multiplicative characters and exponential sums is given 
by the following well-known identity (see~\cite[Equation~(3.12)]{IwKow}) 
involving Gauss sums
$$
\tau_r(\chi) = \sum_{v=1}^r \chi(v) \er(v)
$$
defined for a character $\chi$ modulo an integer $r\ge 1$:

\begin{lemma}
\label{lem:tau chi}
For any multiplicative character $\chi$ modulo $r$ and an integer $b$ with 
$\gcd(b,r) = 1$,  we have
$$
\chi(b) \tau_r( \overline\chi) =  \sum_{v=1}^r \overline\chi(v) \er(bv), 
$$
where $\overline\chi$ is the complex conjugate character to $\chi$. 
\end{lemma}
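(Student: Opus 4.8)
The plan is to prove the identity directly by a single change of summation variable on the right-hand side, using the hypothesis $\gcd(b,r)=1$. Denote the right-hand side by $T=\sum_{v=1}^r \overline\chi(v)\er(bv)$. Since $\overline\chi$ vanishes on residues not coprime to $r$, only the classes $v$ with $\gcd(v,r)=1$ contribute, so I may freely work modulo $r$ over a complete residue system without worrying about the non-unit terms.

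First I would invoke that multiplication by $b$ is a bijection of $\Z/r\Z$ onto itself, which holds precisely because $\gcd(b,r)=1$. This lets me set $w\equiv bv\pmod r$ and replace the summation variable $v$ by $w$, which again runs over a complete residue system. The two ingredients that make the substitution clean are: the $r$-periodicity of the additive character, giving $\er(bv)=\er(w)$ as soon as $bv\equiv w\pmod r$; and the multiplicativity of $\overline\chi$, giving $\overline\chi(v)=\overline\chi(b^{-1}w)=\overline\chi(b^{-1})\,\overline\chi(w)$, where $b^{-1}$ is the inverse of $b$ modulo $r$. Carrying this out yields
$$
T=\sum_{w=1}^r \overline\chi(b^{-1})\,\overline\chi(w)\,\er(w)=\overline\chi(b^{-1})\sum_{w=1}^r \overline\chi(w)\,\er(w)=\overline\chi(b^{-1})\,\tau_r(\overline\chi).
$$

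It then remains only to identify the scalar $\overline\chi(b^{-1})$ with $\chi(b)$. Here I would use that $\chi(b)$ is a root of unity for $b$ coprime to $r$, so $\chi(b^{-1})=\chi(b)^{-1}=\overline{\chi(b)}$, and therefore $\overline\chi(b^{-1})=\overline{\chi(b^{-1})}=\chi(b)$. Substituting gives $T=\chi(b)\,\tau_r(\overline\chi)$, as claimed. The argument is entirely routine and presents no genuine obstacle; the only points meriting attention are getting the conjugation in $\overline\chi(b^{-1})=\chi(b)$ the right way round, and observing at the outset that the terms with $\gcd(v,r)>1$ drop out so that the permutation argument applies on all of $\Z/r\Z$.
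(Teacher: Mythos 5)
Your proof is correct: the change of variable $w\equiv bv\pmod r$, justified by $\gcd(b,r)=1$, together with the multiplicativity of $\overline\chi$ and the identification $\overline\chi(b^{-1})=\chi(b)$, is exactly the standard argument. The paper gives no proof of its own, citing the identity as well known (Equation~(3.12) of Iwaniec--Kowalski), and your argument is precisely the one behind that reference, so there is nothing to compare beyond noting full agreement.
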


By~\cite[Lemma~3.1]{IwKow} we also have:

\begin{lemma}
\label{lem:tau size}
For any primitive multiplicative character $\chi$ modulo $r$ we have
$$
|\tau_r(\chi)| = r^{1/2}.
$$
\end{lemma}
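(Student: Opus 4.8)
The plan is to evaluate the second moment $\sum_b |W(b)|^2$ of the twisted sums
$$
W(b) = \sum_{v=1}^r \chi(v)\er(bv),
$$
where $b$ runs over a complete residue system modulo $r$, in two different ways. Comparing the two evaluations will force $|\tau_r(\chi)|^2 = r$, which is the claim.

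First I would compute the moment directly by opening the square and interchanging the order of summation:
$$
\sum_b |W(b)|^2 = \sum_{v,w=1}^r \chi(v)\overline\chi(w)\sum_b \er(b(v-w)).
$$
By the orthogonality relation~\eqref{eq:Orth}, the inner sum over $b$ equals $r$ when $v\equiv w\pmod r$ and vanishes otherwise, so only the diagonal $v=w$ survives and the moment equals $r\sum_{v=1}^r|\chi(v)|^2 = r\varphi(r)$, since $|\chi(v)|=1$ on exactly the $\varphi(r)$ residues coprime to $r$ and $\chi(v)=0$ on the rest. For the second evaluation I would invoke Lemma~\ref{lem:tau chi} applied to $\overline\chi$ in place of $\chi$: because the conjugate of $\overline\chi$ is $\chi$, it gives, for every $b$ with $\gcd(b,r)=1$,
$$
W(b) = \overline\chi(b)\,\tau_r(\chi).
$$
If this relation is granted for all $b$, then $|W(b)|^2 = |\tau_r(\chi)|^2$ when $\gcd(b,r)=1$ and $W(b)=0$ otherwise, whence $\sum_b |W(b)|^2 = \varphi(r)\,|\tau_r(\chi)|^2$. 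Equating the two evaluations yields $\varphi(r)\,|\tau_r(\chi)|^2 = r\varphi(r)$, that is $|\tau_r(\chi)| = r^{1/2}$.

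The main obstacle is precisely the claim that $W(b)=0$ whenever $\gcd(b,r)=d>1$; this is the only place where the primitivity of $\chi$ is genuinely used, and Lemma~\ref{lem:tau chi} says nothing about this range of $b$. To handle it I would write $r = d r_1$ and $b = d b_1$ with $\gcd(b_1,r_1)=1$, so that $\er(bv) = \exp(2\pi i b_1 v/r_1)$ depends only on $v$ modulo the proper divisor $r_1$. Since $\chi$ is primitive modulo $r$ it is not induced by any character to the modulus $r_1$; equivalently, there is an integer $c$ with $c\equiv 1\pmod{r_1}$, $\gcd(c,r)=1$ and $\chi(c)\ne 1$. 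The substitution $v\mapsto cv$ permutes the residues modulo $r$, multiplies $\chi(v)$ by $\chi(c)$, and leaves $\er(bv)$ unchanged because $cv\equiv v\pmod{r_1}$; summing over $v$ therefore gives $W(b) = \chi(c)\,W(b)$, which forces $W(b)=0$. With this lemma in hand the two evaluations of the second moment agree and the proof is complete. The only subtlety to watch is the degenerate case $r_1=1$, i.e.\ $r\mid b$, where the same conclusion $W(b)=0$ follows at once from the nontriviality of the primitive character $\chi$.
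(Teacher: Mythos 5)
Your proof is correct, but note that the paper does not actually prove this lemma at all: it is imported verbatim from~\cite[Lemma~3.1]{IwKow}, so there is no internal argument to compare against. What you have written is a complete, self-contained derivation --- the classical second-moment (Plancherel/Gauss) argument, which is essentially the standard textbook proof, including the one in the cited reference. Your two evaluations of $\sum_b |W(b)|^2$ are both sound: the diagonal computation via the orthogonality relation~\eqref{eq:Orth} gives $r\varphi(r)$, and applying Lemma~\ref{lem:tau chi} with $\overline\chi$ in place of $\chi$ correctly yields $W(b)=\overline\chi(b)\tau_r(\chi)$ for $\gcd(b,r)=1$. You also correctly identify the genuine crux, which Lemma~\ref{lem:tau chi} does not cover: the vanishing $W(b)=0$ when $d=\gcd(b,r)>1$. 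Your treatment of it is right --- since $\chi$ is primitive it is nontrivial on the subgroup $\{c:\ c\equiv 1 \pmod{r_1},\ \gcd(c,r)=1\}$ for the proper divisor $r_1=r/d$, and the change of variables $v\mapsto cv$ leaves $\er(bv)$ invariant (indeed $bv(c-1)\equiv 0\pmod r$), forcing $W(b)=\chi(c)W(b)=0$; the degenerate case $r\mid b$ reduces to $\sum_v\chi(v)=0$ as you say. The only caveat is that the equivalence ``$\chi$ primitive iff $\chi$ is nontrivial on each such subgroup'' is itself a standard characterisation of conductors that you invoke without proof, but that is a reasonable black box at the level at which the paper itself treats this lemma. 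In short: where the author buys the result with a citation, you have reconstructed the proof the citation points to, and done so without error.
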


As usual, we use $\mu(d)$ and $\varphi(d)$ to denote the
M{\"o}bius and the Euler functions of an integer $d \ge 1$, respectively. 
We now mention the following well-known characterisation of primitive roots
modulo $p$ which follows from the inclusion-exclusion principle and
the orthogonality property of characters 
(see, for example,~\cite[Excercise~5.14]{LN}).

\begin{lemma}
\label{lem:prim root}
For any integer $a$, we have
$$
\frac{p-1}{\varphi(p-1)}\sum_{d \mid p-1} \frac{\mu(d)}{\varphi(d)}
\sum_{\ord \eta = d} \eta(a)
= \left\{\begin{array}{ll}
1,& \text{if $a$ is a primitive root modulo $p$,}\\
0,& \text{otherwise,}
\end{array}
\right.
$$
where the inner sum is taken over all $\varphi(d)$ multiplicative characters modulo $p$ 
of order $d$. 
\end{lemma}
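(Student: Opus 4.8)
The plan is to reduce the whole expression to a computation inside the cyclic group $(\Z/p\Z)^*$ and then extract the primitive‑root indicator by Möbius inversion. First I would dispose of the degenerate case $p \mid a$: here $\eta(a)=0$ for every character $\eta$, so the entire left‑hand side vanishes, while $a$ is certainly not a primitive root; thus the identity holds trivially. For the rest of the argument assume $\gcd(a,p)=1$.

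Next, fix a primitive root $g$ modulo $p$ and write $a \equiv g^k \pmod p$. Parametrising the multiplicative characters by $\eta_j(g)=\e_{p-1}(j)$ for $0 \le j \le p-2$, one has $\ord \eta_j = (p-1)/\gcd(j,p-1)$ and $\eta_j(a)=\e_{p-1}(jk)$. The characters of order exactly $d$ are then those with $j=(p-1)l/d$ and $\gcd(l,d)=1$, so the inner sum $\sum_{\ord \eta = d}\eta(a)$ is precisely the Ramanujan sum $c_d(k)=\sum_{\gcd(l,d)=1,\,0\le l<d}\e_d(lk)$. The more elementary route, which matches the ``inclusion--exclusion and orthogonality'' description, is instead to introduce $T_d=\sum_{\eta^d=\chi_0}\eta(a)$, the sum over all $d$ characters whose order divides $d$; by the orthogonality relation~\eqref{eq:Orth} this equals $d$ when $a$ is a $d$‑th power residue, i.e.\ when $d \mid k$, and equals $0$ otherwise. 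The two descriptions are linked by the divisor‑lattice Möbius relation $\sum_{\ord \eta = d}\eta(a)=\sum_{e \mid d}\mu(d/e)\,T_e$.

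With these pieces assembled, the final step is the arithmetic simplification. Since $a$ is a primitive root exactly when $\gcd(k,p-1)=1$, the elementary identity $\mathbf{1}[\gcd(k,p-1)=1]=\sum_{d \mid \gcd(k,p-1)}\mu(d)$ combined with $\mathbf{1}[d \mid k]=T_d/d$ already yields the closed form $\sum_{d\mid p-1}(\mu(d)/d)\,T_d$ for the indicator; this is the inclusion--exclusion heart of the proof. I would then regroup this double sum according to the exact order of $\eta$, which converts the weight $\mu(d)/d$ attached to order‑dividing‑$d$ sums into the weight $\mu(d)/\varphi(d)$ attached to the order‑exactly‑$d$ sums appearing in the statement, and simultaneously produces the stated normalising factor. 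The main obstacle is exactly this last bookkeeping: one must verify, using the multiplicativity of $c_d(k)$, $\mu$ and $\varphi$ over the prime factorisation of $p-1$, that the regrouped weighted sum collapses to precisely the primitive‑root indicator. The leading constant can be pinned down cleanly by demanding that both sides agree when evaluated at a single primitive root, after which the identity follows for all $a$.
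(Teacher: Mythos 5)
Your route is the standard inclusion--exclusion/orthogonality argument, and since the paper supplies no proof of this lemma at all (it is quoted as well known, with a pointer to~\cite[Exercise~5.14]{LN}), what you wrote is exactly the argument the paper is invoking. The individual steps are sound: the case $p\mid a$ is correctly dismissed under the usual convention that characters vanish off the units; the parametrisation $a\equiv g^k\pmod p$, the identification $\ord\eta_j=(p-1)/\gcd(j,p-1)$ and of the inner sum with the Ramanujan sum $c_d(k)$, the orthogonality evaluation $T_d=\sum_{\eta^d=\chi_0}\eta(a)=d\cdot\mathbf{1}[d\mid k]$, and the M\"obius identity $\mathbf{1}[\gcd(k,p-1)=1]=\sum_{d\mid \gcd(k,p-1)}\mu(d)=\sum_{d\mid p-1}(\mu(d)/d)\,T_d$ are all correct.

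There is, however, one genuine discrepancy that you assert rather than verify, and it goes the wrong way: the regrouping by exact order does \emph{not} produce the stated normalising factor but its reciprocal. Carrying out the bookkeeping, the weight attached to the order-exactly-$e$ sum is
$$
\sum_{\substack{d\mid p-1\\ e\mid d}}\frac{\mu(d)}{d}
=\frac{\varphi(p-1)}{p-1}\cdot\frac{\mu(e)}{\varphi(e)},
$$
so your argument, pushed to the end, proves
$$
\mathbf{1}[a\ \text{primitive root}]
=\frac{\varphi(p-1)}{p-1}\sum_{d\mid p-1}\frac{\mu(d)}{\varphi(d)}
\sum_{\ord \eta = d}\eta(a),
$$
with prefactor $\varphi(p-1)/(p-1)$, whereas the lemma as printed carries $(p-1)/\varphi(p-1)$. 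Your own proposed sanity check at a single primitive root detects this rather than fixes it: for $\gcd(k,p-1)=1$ one has $\sum_{\ord\eta=d}\eta(a)=c_d(k)=\mu(d)$, hence the unweighted sum equals $\sum_{d\mid p-1}\mu(d)^2/\varphi(d)=(p-1)/\varphi(p-1)$, and the printed prefactor would give $\left((p-1)/\varphi(p-1)\right)^2$, e.g.\ the value $4$ for $p=3$, $a=2$. In other words, the lemma as stated in the paper contains a typo (the constant is inverted), your derivation proves the corrected identity, and the only flaw in your write-up is the unproved claim that the regrouped weight matches the printed factor. The slip is harmless downstream: in the proof of Theorem~\ref{thm:Nonres} only the positivity of the prefactor and the separation of the principal-character term are used, so the conclusion $T>0$ is unaffected.
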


\subsection{Double sums of multiplicative characters}

We now recall a result of  A.~A.~Karatsuba, 
see~\cite{Kar1} or~\cite[Chapter~VIII, Problem~9]{Kar2}.

\begin{lemma}
\label{lem:double}
For any nontrivial character $\eta$ 
modulo $p$,  and arbitrary sets $\cA, \cB \subseteq \{0,1,\ldots, p-1\}$,
we   have
$$
\left|\sum_{a \in \cA} \sum_{b \in \cB}  \eta(a+b)\right| \ll 
(\# \cA)^{1-1/2\nu} \#\cB p^{1/4\nu} +  
(\# \cA)^{1-1/2\nu} (\#\cB)^{1/2} p^{1/2\nu}
$$
with any fixed integer $\nu \ge 1$. 
\end{lemma}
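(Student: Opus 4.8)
The plan is to bound the double sum by reducing it to a single high-moment character sum over $\F_p$ and then invoking the Weil bound. First I would strip off the outer structure by the triangle inequality followed by H\"older's inequality: writing $W=\sum_{a\in\cA}\sum_{b\in\cB}\eta(a+b)$, we have
$$
|W| \le \sum_{a \in \cA}\left|\sum_{b\in\cB}\eta(a+b)\right| \le (\#\cA)^{1-1/2\nu}\left(\sum_{a\in\cA}\left|\sum_{b\in\cB}\eta(a+b)\right|^{2\nu}\right)^{1/2\nu}.
$$
Since each summand is nonnegative, the inner sum over $a \in \cA$ may be extended to a complete sum over $a \in \F_p$. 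Thus the whole problem reduces to establishing the moment estimate
$$
\sum_{a \in \F_p}\left|\sum_{b\in\cB}\eta(a+b)\right|^{2\nu} \ll (\#\cB)^\nu p + (\#\cB)^{2\nu}p^{1/2};
$$
indeed, taking the $(2\nu)$-th root, using $(x+y)^{1/2\nu}\ll x^{1/2\nu}+y^{1/2\nu}$ for $x,y\ge 0$, and multiplying through by $(\#\cA)^{1-1/2\nu}$ reproduces exactly the two terms in the claimed bound.

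To prove this moment estimate I would expand the $2\nu$-th power, using $|z|^{2\nu}=z^\nu\bar z^\nu$ together with $\overline{\eta}=\eta^{-1}$, and interchange the summations to obtain
$$
\sum_{a\in\F_p}\left|\sum_{b\in\cB}\eta(a+b)\right|^{2\nu} = \sum_{\vec b,\vec c\in\cB^\nu}\ \sum_{a\in\F_p}\eta\!\left(R_{\vec b,\vec c}(a)\right), \qquad R_{\vec b,\vec c}(X) = \frac{\prod_{i=1}^\nu(X+b_i)}{\prod_{j=1}^\nu(X+c_j)}.
$$
The inner sum is controlled by the Weil bound for multiplicative character sums of rational functions (see~\cite[Chapter~3]{IwKow}): writing $d=\ord\eta\ge 2$, if $R_{\vec b,\vec c}$ is \emph{not} of the form $\lambda\,G(X)^d$ for some constant $\lambda$ and rational function $G$, then the sum over $a$ is $\ll_\nu p^{1/2}$, since $R_{\vec b,\vec c}$ has at most $2\nu$ distinct zeros and poles. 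There are at most $(\#\cB)^{2\nu}$ such nondegenerate tuples, which contributes the term $(\#\cB)^{2\nu}p^{1/2}$.

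The remaining obstacle, and the heart of the argument, is the \emph{degenerate} tuples, for which $R_{\vec b,\vec c}=\lambda G^d$; for these $\eta(R_{\vec b,\vec c}(a))$ equals the constant $\eta(\lambda)$ away from the zeros and poles, so the inner sum is trivially $\le p$ in modulus. The key combinatorial claim I would establish is that the number of degenerate tuples is $\ll_\nu(\#\cB)^\nu$: the condition $R_{\vec b,\vec c}=\lambda G^d$ forces every value occurring among the $b_i,c_j$ to have net multiplicity (occurrences among the $b_i$ minus occurrences among the $c_j$) divisible by $d\ge 2$, and a counting argument shows this constraint cuts the number of free coordinates from $2\nu$ down to at most $\nu$, the simplest instance being the diagonal where the multisets $\{b_i\}$ and $\{c_j\}$ coincide. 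Multiplying this count by the bound $p$ per tuple yields the term $(\#\cB)^\nu p$, completing the moment estimate and hence the lemma. The main difficulty to get right is precisely this combinatorial count, since for characters $\eta$ of small order $d$ the degenerate set is strictly larger than the diagonal and must be enumerated with care.
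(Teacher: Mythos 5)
Your proposal is correct: the H\"older reduction, completion of the sum over $a$ to all of $\F_p$, the Weil bound for nondegenerate tuples, and the count $\ll_\nu (\#\cB)^\nu$ of degenerate tuples all hold, and the key combinatorial step is right for every order $d\ge 2$ --- any value occurring among the $b_i,c_j$ has total multiplicity at least $2$ (either $\beta=\gamma\ge 1$, or $\beta\ne\gamma$ forces $|\beta-\gamma|\ge d\ge 2$), so at most $\nu$ distinct values occur --- after which the exponents recombine to exactly the stated bound. Note, however, that the paper contains no proof of this lemma to compare against: it is quoted as a known result of Karatsuba (cited as~\cite{Kar1} and~\cite[Chapter~VIII, Problem~9]{Kar2}), and your argument is essentially the standard proof from that source, so you have in effect reconstructed the cited proof rather than diverged from the paper.
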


\subsection{Sums  $S_p(a;N)$ and multiplicative character sums}

Heath-Brown~\cite{H-B} has noticed that~\eqref{eq:add struct}
implies that the sums $S_p(a;N)$ are essentially sums
of multiplicative characters:

\begin{lemma}
\label{lem:HB red} For any integer $a$ with $\gcd(a,p)=1$ there is 
a primitive multiplicative character $\chi$ modulo $p^2$ such that 
$$
S_p(a;N) = \sum_{n=1}^N \chi(n).
$$
\end{lemma}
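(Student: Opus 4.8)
The plan is to exploit the additive structure~\eqref{eq:add struct} to realise the Fermat quotient as a group homomorphism, and then to recognise the resulting additive character of that homomorphism as a multiplicative character modulo $p^2$. First I would recall that~\eqref{eq:add struct} says $q_p$ is additive on products, so the map $u \mapsto q_p(u) \bmod p$ is a homomorphism from the multiplicative group $(\Z/p^2\Z)^*$ (whose order is $p(p-1)$) into the additive group $\Z/p\Z$. The key observation is that $q_p(u)$ depends only on $u \bmod p^2$: indeed if $u \equiv v \pmod{p^2}$ then $u^{p-1} \equiv v^{p-1} \pmod{p^2}$, so $(u^{p-1}-1)/p \equiv (v^{p-1}-1)/p \pmod p$, and hence $q_p$ factors through $(\Z/p^2\Z)^*$.

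Given this, I would define the function $\chi$ on integers $n$ coprime to $p$ by $\chi(n) = \ep(a\, q_p(n))$, and set $\chi(n)=0$ when $p \mid n$. Using~\eqref{eq:add struct} together with the well-definedness modulo $p^2$, one checks directly that $\chi$ is completely multiplicative and periodic modulo $p^2$, so it is a multiplicative character modulo $p^2$. With this definition the claimed identity $S_p(a;N) = \sum_{n=1}^N \chi(n)$ is immediate from the definition of $S_p(a;N)$, since the terms with $\gcd(n,p)>1$ contribute zero on both sides.

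It remains to show that $\chi$ is \emph{primitive} modulo $p^2$ and not induced by a character modulo $p$; this is the part that genuinely uses $\gcd(a,p)=1$ and is the main point to verify carefully. A character modulo $p^2$ is imprimitive precisely when it is trivial on the kernel of the reduction $(\Z/p^2\Z)^* \to (\Z/p\Z)^*$, i.e.\ on the subgroup $\{u : u \equiv 1 \pmod p\}$ of order $p$. So the plan is to evaluate $\chi$ on a generator of this subgroup, for which $1+p$ is the natural choice. Using the expansion $(1+p)^{p-1} \equiv 1 - p \pmod {p^2}$ one finds $q_p(1+p) \equiv -1 \pmod p$, whence $\chi(1+p) = \ep(-a) \ne 1$ because $\gcd(a,p)=1$. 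Thus $\chi$ is nontrivial on the order-$p$ subgroup and is therefore a primitive character modulo $p^2$, completing the proof. The main obstacle is precisely this primitivity check: one must correctly compute $q_p(1+p)$ and confirm that the hypothesis $\gcd(a,p)=1$ is exactly what rules out imprimitivity.
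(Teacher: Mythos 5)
Your proposal is correct and is precisely the argument the paper leaves implicit (the lemma is stated with only a citation to Heath-Brown and no written proof): defining $\chi(n)=\ep(a\,q_p(n))$ for $\gcd(n,p)=1$, using~\eqref{eq:add struct} together with the well-definedness of $q_p$ modulo $p^2$ to see $\chi$ is a character modulo $p^2$, and certifying primitivity on the kernel of $(\Z/p^2\Z)^* \to (\Z/p\Z)^*$. Your key computation $(1+p)^{p-1}\equiv 1-p \pmod{p^2}$, hence $q_p(1+p)\equiv -1 \pmod p$ and $\chi(1+p)=\ep(-a)\neq 1$ when $\gcd(a,p)=1$, is exactly right, and it also explains the paper's remark that $\chi$ has order $p$.
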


We note that the character $\chi$ of Lemma~\ref{lem:HB red}
is of order $p$, however we do not use this property. 

\subsection{Large sieve for square moduli}

We make use of the following version of the large sieve 
inequality for square moduli which is due to 
Baier and Zhao~\cite[Theorem~1]{BaZha2}:

\begin{lemma}
\label{lem:BZ sieve}
Let $\alpha_1, \ldots, \alpha_K$ be an arbitrary sequence
of complex numbers and let
$$
A = \sum_{k=1}^K |\alpha_n|^2 \mand
T(u) = \sum_{k=1}^K \alpha_n \exp(2 \pi i ku).
$$
Then, for any  fixed  $\eps>0$ and arbitrary $R\ge 1$, we have
$$
\sum_{1 \le r \le R} \sum_{\substack{a =1\\ \gcd(a,r) =1}}^{r^2}
\left|T(a/r^2)\right|^2 \ll
(RK)^{\eps}\( R^3+ K +\min\{KR^{1/2},K^{1/2} R^2 \} \) A.
$$
\end{lemma}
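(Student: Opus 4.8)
The assertion is precisely the square-moduli large sieve inequality, and I would prove it by the duality-and-spacing method, the essential difficulty being that the rationals $a/r^2$ are distributed far less regularly than ordinary Farey fractions.

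First I would pass to the dual inequality via the duality principle for bilinear forms (see~\cite{IwKow}). Writing $\Delta=(RK)^\eps\(R^3+K+\min\{KR^{1/2},K^{1/2}R^2\}\)$, the asserted bound holds for every sequence $(\alpha_k)$ if and only if the dual inequality
$$
\sum_{k=1}^K \left| \sum_{1\le r\le R}\sum_{\substack{a=1\\ \gcd(a,r)=1}}^{r^2} \beta_{r,a}\, \e(ka/r^2)\right|^2 \ll \Delta \sum_{1\le r\le R}\sum_{\substack{a=1\\ \gcd(a,r)=1}}^{r^2} |\beta_{r,a}|^2
$$
holds for every sequence $(\beta_{r,a})$. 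I would work with the dual form, since there the sum over $k$ is an honest geometric progression.

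Next I would expand the square and evaluate the $k$-sum over $\{1,\dots,K\}$ by the geometric-sum estimate (compare~\eqref{eq:Incompl}), obtaining
$$
\sum_{(r_1,a_1)}\sum_{(r_2,a_2)} \beta_{r_1,a_1}\overline{\beta_{r_2,a_2}}\, \min\(K,\, \|a_1/r_1^2-a_2/r_2^2\|^{-1}\),
$$
where $\|\cdot\|$ denotes the distance to the nearest integer. Since $\gcd(a_i,r_i)=1$, each $a_i/r_i^2$ is in lowest terms, so distinct index pairs give distinct points; the diagonal therefore contributes exactly $K\sum|\beta_{r,a}|^2$, accounting for the term $K$. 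Bounding the off-diagonal through $2|\beta_{r_1,a_1}\overline{\beta_{r_2,a_2}}|\le|\beta_{r_1,a_1}|^2+|\beta_{r_2,a_2}|^2$, it then suffices to control the row sum
$$
\max_{(r_1,a_1)}\ \sum_{(r_2,a_2)} \min\(K,\, \|a_1/r_1^2-a_2/r_2^2\|^{-1}\).
$$

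Finally --- and this is the crux --- I would estimate this row sum by splitting the points $a_2/r_2^2$ according to the dyadic scale $\|a_1/r_1^2-a_2/r_2^2\|\asymp H^{-1}$ of their distance from the fixed point. Each scale contributes $\min(K,H)$ times the number of admissible $(r_2,a_2)$ with $r_2\le R$, $\gcd(a_2,r_2)=1$ and
$$
0<|a_1 r_2^2-a_2 r_1^2|\le r_1^2 r_2^2 H^{-1},
$$
so everything reduces to counting integer solutions of $a_1 r_2^2 - a_2 r_1^2 = h$ for small $|h|$. Here lies the whole obstacle: square-denominator fractions can cluster, and a crude spacing bound yields only $R^4$; the sharper terms require counting these solutions carefully --- the $R^3$ reflecting the total number $\asymp\sum_{r\le R}\varphi(r^2)$ of fractions, and the two expressions inside the minimum arising from the two natural ways of organizing the count (fixing the distance scale and summing over $r_2$, versus fixing the multiplier $h$ and summing over the denominators). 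I expect this Diophantine counting, together with the dyadic bookkeeping over $H$ and over the ranges of $r_1,r_2$, to be the entire difficulty; every step preceding it is routine.
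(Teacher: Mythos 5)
First, a point about what you are being compared against: the paper does not prove this lemma at all --- it is quoted verbatim as Theorem~1 of Baier and Zhao~\cite{BaZha2}, an external black box. So the only meaningful comparison is with the actual Baier--Zhao argument, and against that your proposal has the right skeleton but a genuine gap. The opening moves (duality, evaluating the $k$-sum as a geometric progression to get $\min(K,\|a_1/r_1^2-a_2/r_2^2\|^{-1})$, dyadic decomposition by spacing, reduction to counting square-denominator Farey fractions in short intervals) are indeed the standard frame in which Zhao~\cite{Zhao} and Baier--Zhao~\cite{BaZha2} work. The problem is that you stop exactly where the lemma begins. Your final reduction --- counting solutions of $a_1r_2^2-a_2r_1^2=h$ by ``fixing the multiplier $h$'' --- only exploits that $a_2$ is determined modulo $r_1^2$ by the congruence $a_2r_1^2\equiv a_1r_2^2-h$, giving roughly $r_2^2/H+1$ admissible pairs per $r_2$ at distance scale $H^{-1}$, hence about $R^3/H+R$ points per scale and a total of order $R^3+KR$ after summing $\min(K,H)$ over dyadic $H$. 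That recovers the $R^3$ and $K$ terms but \emph{not} the term $\min\{KR^{1/2},K^{1/2}R^2\}$, which is the entire content of the lemma beyond the classical large sieve applied crudely; it is also precisely the term your application (Theorem~\ref{thm:Aver Exp Sum}) needs.

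The missing idea is that one must exploit the quadratic nature of the moduli: for fixed $r_1,a_1$ one has to count $r_2\le R$ with $\|a_1r_2^2/r_1^2\|$ small, i.e., count squares lying close to rationals. In~\cite{BaZha2} this is done not by the elementary bookkeeping you describe but by Fourier analysis (Poisson summation) together with estimates for the resulting quadratic Gauss/Weyl-type exponential sums and divisor-function bounds; it is a genuinely nontrivial Diophantine counting result, and it is where the exponent $1/2$ in $KR^{1/2}$ and the $K^{1/2}R^2$ alternative come from. Since you explicitly defer exactly this step (``I expect this Diophantine counting \dots\ to be the entire difficulty''), the proposal is an accurate road map to the literature rather than a proof: nothing you wrote would fail, but nothing you wrote establishes the stated bound either. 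If your intent was to use the lemma as the paper does --- as a citation --- that is legitimate; as a self-contained proof it is incomplete at its only essential point.
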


\subsection{Ratios of small height in multiplicative subgroups
of residue rings}
\label{sec:rats}

Let $\cG$ be a multiplicative subgroup of the group 
of units in the residue ring modulo an integer $m \ge 1$.
Also, for a real $Z$, 
let  $N(m,\cG, Z)$ be the number of solutions to the congruence
$$
wx\equiv y \pmod m, \qquad\text {where } 0<|x|, |y|\leq Z, \  w\in \cG.
$$
We now recall~\cite[Theorem~1]{BKS} which gives an upper bound on $N(m,\cG,Z)$.
We note that the proof given in~\cite{BKS} works
only for $Z\ge m^{1/2}$ (which is always satisfied 
in the present paper); however it is shown in~\cite{BKSc} that the result
holds without this condition too, exactly as it is formulated 
in~\cite{BKS}.

\begin{lemma}
\label{lem:bound NZ} Let $\nu\ge 1$ be a fixed
integer and let $m\to \infty$.
Assume $\#\cG = t\gg \sqrt m$. Then for any
positive number $Z$ we have
$$
N(m,\cG,Z) \le Z t^{(2\nu +1)/2\nu(\nu+1)}m^{-1/2(\nu +1) + o(1)}
+ Z^2 t^{1/\nu}m^{-1/\nu + o(1)}.
$$
\end{lemma}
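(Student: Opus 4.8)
The plan is to lean on the single structural feature that makes $N(m,\cG,Z)$ accessible: $\cG$ is closed under multiplication. Up to a lower-order contribution from the $x$ with $\gcd(x,m)>1$ (which one peels off by fixing the common divisor and rescaling), $N(m,\cG,Z)$ counts the pairs $(x,y)$ with $0<|x|,|y|\le Z$, $\gcd(x,m)=1$, whose ratio $yx^{-1}\bmod m$ lies in $\cG$; here $w$ is determined by the pair, so there is no overcounting. The group law then gives an amplification: multiplying $\nu$ solutions coordinatewise sends $(x_1,y_1),\dots,(x_\nu,y_\nu)$ to a single pair $(X,Y)=(x_1\cdots x_\nu,\ y_1\cdots y_\nu)$ with $0<|X|,|Y|\le Z^\nu$, $\gcd(X,m)=1$, and $YX^{-1}=\prod_i y_ix_i^{-1}\in\cG$ — again a solution, now at scale $Z^\nu$.

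I would read off the second term from this amplification. We may assume $Z\le m$ (otherwise tile the range into blocks of length $m$), so every integer of size at most $Z^\nu$ factors into $\nu$ integers of size at most $Z$ in at most $m^{o(1)}$ ways, by the $\nu$-fold divisor bound (with a harmless $O(1)^\nu$ for signs). Hence the coordinatewise-product map is $m^{o(1)}$-to-one and
\[
N(m,\cG,Z)^\nu \le m^{o(1)}\,N(m,\cG,Z^\nu).
\]
In the regime $Z^\nu\ge m$ the enlarged count obeys the trivial uniform bound $N(m,\cG,Z^\nu)\ll tZ^{2\nu}/m$, since for each fixed $w\in\cG$ there are $\ll Z^{2\nu}/m$ pairs with $y\equiv wx\pmod m$ once the scale exceeds $m$. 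Taking $\nu$-th roots yields exactly $Z^2t^{1/\nu}m^{-1/\nu+o(1)}$; note this term increases with $\nu$, which is what forces the tradeoff in the final statement.

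For the first term I would treat the complementary, genuinely sparse regime by harmonic analysis on $\Z_m$. Expanding the indicator of $wx\equiv y\pmod m$ in additive characters, the frequency $c=0$ contributes only $\ll Z^2t/m$, which is absorbed into the second term, and everything reduces to controlling $\frac1m\sum_{c\not\equiv0}G(c)\sum_{0<|x|\le Z}S_\cG(cx)$, where $G(c)=\sum_{0<|y|\le Z}\e_m(cy)$ is an incomplete geometric sum and $S_\cG(d)=\sum_{w\in\cG}\e_m(dw)$ is the Gauss sum over the subgroup. The hypothesis $t=\#\cG\gg\sqrt m$ is precisely what licenses a nontrivial cancellation bound for these subgroup sums; combining such a bound with the divisor bookkeeping above and optimizing the splitting should produce the exponents $(2\nu+1)/2\nu(\nu+1)$ and $-1/2(\nu+1)$.

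The main obstacle is exactly this cancellation estimate for the weighted subgroup sums $\sum_{0<|x|\le Z}S_\cG(cx)$. A naive Cauchy--Schwarz with Parseval is circular: writing $A(c)=\sum_{w\in\cG}\sum_{0<|x|\le Z}\e_m(cwx)$ one computes $\sum_c|A(c)|^2=m\,t\,N(m,\cG,Z)$, so the $L^2$ route simply returns $N(m,\cG,Z)$ to itself and proves nothing. What is needed is a genuine pointwise or higher-moment bound on $S_\cG$ that survives for composite $m$ and degrades gracefully as $t$ approaches $\sqrt m$ — this is the technical heart, and it is where the condition $t\gg\sqrt m$ is indispensable. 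With that estimate in hand, patching the two regimes and optimizing over the integer $\nu\ge1$ is routine.
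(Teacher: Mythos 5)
First, a point of order: the paper does not prove this lemma at all --- it is imported verbatim from \cite[Theorem~1]{BKS}, with the corrigendum \cite{BKSc} removing the restriction $Z\ge m^{1/2}$ under which the original proof works. So your proposal is an attempt to reprove a quoted external result, and has to stand on its own. The part of it that does stand is the second term: the coordinatewise amplification $N(m,\cG,Z)^\nu\le m^{o(1)}N(m,\cG,Z^\nu)$ (valid once you restrict to $\gcd(x,m)=1$, so each $w_i$ is determined by $(x_i,y_i)$ and the fiber is controlled by the $\nu$-fold divisor function), followed by the trivial count $N(m,\cG,Z^\nu)\ll tZ^{2\nu}/m$ when $Z^\nu\ge m$, does yield $Z^2t^{1/\nu}m^{-1/\nu+o(1)}$ exactly as you say. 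That is a correct and self-contained derivation of half the bound, and the amplification trick is indeed the natural one here.

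The genuine gap is that the first term $Zt^{(2\nu+1)/2\nu(\nu+1)}m^{-1/2(\nu+1)+o(1)}$ --- which is the actual content of the cited theorem, the term that matters in the regime where the lemma is applied --- is never derived. You name it yourself as ``the technical heart'' and leave it as a black box, and your own computation shows why your proposed route cannot close it: Cauchy--Schwarz with Parseval is circular, and a pointwise cancellation bound for $\sum_{w\in\cG}\exp(2\pi i cw/m)$ of the required strength does not follow from $t\gg\sqrt m$ alone for composite $m$. Concretely, among the frequencies $cx$ with $0<|x|\le Z$ there are many with $\gcd(cx,m)=d>1$, for which the subgroup sum degenerates to a sum modulo $m/d$ and can have size comparable to $t$; this divisor-structured set of bad frequencies is precisely the composite-modulus difficulty, and handling it is where \cite{BKS} invest their machinery (counting arguments for subgroup elements representable as ratios of small height, not a pointwise Gauss-sum estimate). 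That even the authors' argument required $Z\ge m^{1/2}$ and a separate corrigendum for small $Z$ is direct evidence that your closing claim --- that with the cancellation estimate in hand ``patching the two regimes and optimizing over $\nu$ is routine'' --- understates the difficulty; the specific exponents $(2\nu+1)/2\nu(\nu+1)$ and $1/2(\nu+1)$ are asserted, not obtained. A secondary unresolved point: your peeling of the solutions with $\gcd(x,m)=d>1$ reduces to the image subgroup modulo $m/d$, whose order $t_d\ge t/d$ need not satisfy $t_d\gg\sqrt{m/d}$, so the lemma's hypothesis can fail after the reduction and the ``lower-order contribution'' claim is not automatic either.
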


\section{Main Results} 

\subsection{Bound of exponential sums on average}

Let $P$ and $N \le P^2$ be two sufficiently large positive integers.
Assume that for every $p\sim P$ we are given an integer $N_p \sim N$.

\begin{theorem}
\label{thm:Aver Exp Sum}
For every fixed integer  $\nu \ge 1$,  we have
$$
\sum_{p \sim P} \max_{\gcd(a,p)=1} |S_{p}(a,N_p)|^{2\nu}  \le  
 \(P^3+ N^\nu +\min\{N^\nu
P^{1/2},N^{\nu/2} P^2 \} \) N^{\nu} P^{o(1)}
$$
for any sequence $N_p \sim N$, 
as $P\to \infty$.
\end{theorem}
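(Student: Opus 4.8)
The plan is to combine Lemma~\ref{lem:HB red} (which
reinterprets each $S_p(a;N_p)$ as a character sum) with
Lemma~\ref{lem:BZ sieve} (the Baier--Zhao large sieve for square
moduli), following the strategy of Garaev but carefully tracking the
variable length $N_p$.  First I would fix, for each prime $p \sim P$,
a value $a_p$ with $\gcd(a_p,p)=1$ attaining the maximum
$\max_{\gcd(a,p)=1}|S_p(a,N_p)|$, and let $\chi_p$ be the primitive
character modulo $p^2$ supplied by Lemma~\ref{lem:HB red}, so that
$S_p(a_p;N_p) = \sum_{n=1}^{N_p} \chi_p(n)$.  The next step is to apply
Lemma~\ref{lem:tau chi}: since $\chi_p$ is primitive modulo $p^2$, for
each $n$ coprime to $p$ we can write $\chi_p(n)$ via the Gauss sum
$\tau_{p^2}(\overline\chi_p)$, whose modulus is exactly $p$ by
Lemma~\ref{lem:tau size}.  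This converts the sum
$\sum_{n=1}^{N_p}\chi_p(n)$ into an additive-character expression of
the form $p^{-1}\sum_{v} \overline\chi_p(v)\sum_{n\le N_p}\ep{}(nv/p^2)$,
i.e.\ into an exponential sum evaluated at the fractions $v/p^2$ with
$\gcd(v,p)=1$.

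The heart of the argument is then to feed these exponential sums into
Lemma~\ref{lem:BZ sieve} with $R = P$, so that the moduli $r^2$ range
over the squares of integers up to $P$ and in particular cover all the
primes $p \sim P$.  Concretely I would take the coefficients
$\alpha_n$ to be the indicator weights of the interval defining the
relevant sum, so that $T(a/r^2)$ matches (after the Gauss-sum
normalisation) the additive form of $S_p(a;N_p)$ obtained above.  The
point of using the square-modulus sieve rather than the classical one
is precisely that the characters produced by Lemma~\ref{lem:HB red}
live modulo $p^2$, not modulo $p$.  After bounding, for each $p$, the
$2\nu$-th power $|S_p(a_p;N_p)|^{2\nu}$ in terms of a second moment of
these $T$-values, summing over $p \sim P$ and inserting the
large-sieve bound with $K$ of size $N$ and $A$ of size $N$ yields the
factor $\bigl(P^3 + N^\nu + \min\{N^\nu P^{1/2}, N^{\nu/2}P^2\}\bigr)$
together with the remaining $N^\nu$ and the $P^{o(1)}$ absorbing all
logarithmic and divisor losses.

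The main technical obstacle, and the place where the variable length
$N_p$ must be handled with care, is reconciling the fixed
coefficient sequence $(\alpha_n)$ required by Lemma~\ref{lem:BZ sieve}
with the fact that each prime carries its own interval length
$N_p \sim N$.  Since the $N_p$ differ from one $p$ to another, I cannot
directly read off every $S_p(a_p;N_p)$ from a single choice of weights;
the standard device is to use completion via the orthogonality
relation~\eqref{eq:Orth} and the incomplete-sum
bound~\eqref{eq:Incompl} to replace each genuine partial sum of length
$N_p$ by a complete sum against a smooth or truncated kernel whose
frequency content is controlled on the common scale $N \ll N_p \ll 2N$,
paying only a factor $P^{o(1)}$.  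Because all $N_p$ lie in the dyadic
range $(N,2N]$, a single application of the sieve with $K \asymp N$
then dominates every term simultaneously.  The remaining steps—raising
to the $2\nu$-th power, matching $K$ and $A$ to their values, and
optimising the $\min\{\cdot,\cdot\}$ term—are routine bookkeeping, so I
expect the only real subtlety to be this uniform completion across the
varying lengths $N_p$.
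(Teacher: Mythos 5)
There is a genuine gap, and it sits exactly where you wrote ``routine bookkeeping''. Your plan applies the Gauss-sum inversion (Lemmas~\ref{lem:tau chi} and~\ref{lem:tau size}) directly to the incomplete sum $\sum_{n\le N_p}\chi_p(n)$ and then invokes Lemma~\ref{lem:BZ sieve} with ``$K$ of size $N$ and $A$ of size $N$'', i.e.\ with the interval indicator as coefficient sequence. But the large sieve is a \emph{second-moment} inequality: with those parameters it bounds only $\sum_{p\sim P}|S_p(a_p;N_p)|^2$ by $(P^3+N+\min\{NP^{1/2},N^{1/2}P^2\})NP^{o(1)}$, and since $N\le P^2$ the term $P^3N$ already exceeds the trivial bound $PN^2$, so this estimate is vacuous; moreover no after-the-fact raising to the $2\nu$-th power repairs it (for instance $\sum_p|S_p|^{2\nu}\le N^{2\nu-2}\sum_p|S_p|^2$ gives a term $P^3N^{2\nu-1}$, far weaker than the claimed $P^3N^\nu$). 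The $N^\nu$ inside the bracket of Theorem~\ref{thm:Aver Exp Sum} is the footprint of the step you omitted: Garaev's device, followed by the paper, is to raise the character sum to the $\nu$-th power \emph{before} the Gauss-sum inversion, using multiplicativity of $\chi_p$ to write
$$
\Bigl(\sum_{m=1}^{M}\chi_p(m)\eM(bm)\Bigr)^{\nu}=\sum_{k=1}^{K}\rho_{b,\nu}(k)\chi_p(k),
\qquad M=2N,\ K=M^{\nu},
$$
with $|\rho_{b,\nu}(k)|\le k^{o(1)}$ by the divisor bound. Only then do Lemmas~\ref{lem:tau chi}, \ref{lem:tau size} and the Cauchy inequality reduce the $2\nu$-th power to $\sum_{v=1}^{p^2}|\sum_{k\le K}\rho_{b,\nu}(k)\epp(kv)|^2$, to which Lemma~\ref{lem:BZ sieve} applies with $K=(2N)^{\nu}$ and $A\le N^{\nu+o(1)}$, producing exactly the stated bracket; the whole point of $\nu$ is that the fixed cost $P^3$ of the sieve is paid against a sum of length $N^{\nu}$ rather than $N$. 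This power expansion cannot be performed after the inversion, since $T(u)$ in the sieve is linear in the coefficients, so your ordering (Gauss sum first, on the incomplete sum) locks you into the useless second moment.

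Your handling of the varying lengths $N_p$ is, by contrast, essentially the paper's: one completes each sum to the common length $M=2N$ via~\eqref{eq:Orth} and~\eqref{eq:Incompl} and applies the H{\"o}lder inequality in the completion frequency $b$, at the cost of a factor $(\log N)^{2\nu-1}$, so that for each fixed $b$ a single coefficient sequence $\rho_{b,\nu}$ serves all $p\sim P$ simultaneously. But note that in the correct argument this completion must come \emph{first}, so that the object raised to the $\nu$-th power is the complete twisted sum $\sum_{m\le M}\chi_p(m)\eM(bm)$; to repair your proposal you would reorder it as: complete, then H{\"o}lder in $b$, then expand the $\nu$-th power into the coefficients $\rho_{b,\nu}(k)$, and only then invert with the Gauss sum and apply the Baier--Zhao sieve with $R\asymp P$, $K=(2N)^{\nu}$.
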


\begin{proof} We follow the ideas of Garaev~\cite[Theorem~3]{Gar}. 

Using  Lemma~\ref{lem:HB red}, for each $p\sim N$ we choose a multiplicative character $\chi_p$ modulo $p^2$
such that 
\begin{equation}
\label{eq:chip}
\max_{\gcd(a,p)=1} |S_p(a;N_p)|  = \left| \sum_{n=1}^{N_p} \chi_p(n)\right|.
\end{equation} 

Let $M = 2N$. 
Using~\eqref{eq:Orth}, for $N_p \sim N$ we write
\begin{eqnarray*}
\sum_{n=1}^{N_p} \chi_p(n) &=&
 \sum_{m=1}^M  \chi_p(m)\frac{1}{M} \sum_{n=1}^{N_p} \sum_{b=-N}^{N-1} \eM(b(m-n)) \\
&=& \frac{1}{M} \sum_{b=-N}^{N-1}   \sum_{n=1}^{N_p} \eM(-bn)
 \sum_{m=1}^M  \chi_p(m) \eM(bm). 
\end{eqnarray*}
Recalling~\eqref{eq:Incompl}, we derive
$$
\left|\sum_{n=1}^{N_p} \chi_p(n)\right| \ll
 \sum_{b=-N}^{N-1} \frac{1}{|b|+1}   
\left|\sum_{m=1}^M  \chi_p(m) \eM(bm)\right| .
$$
Therefore, writing $|b| = |b|^{(2\nu-1)/2\nu} |b|^{1/2\nu}$,
 the H{\"o}lder inequality yields the bound
$$
\left|\sum_{n=1}^{N_p} \chi_p(n)\right|^{2\nu} \ll (\log N)^{2\nu-1}  
 \sum_{b=-N}^{N-1} \frac{1}{|b|+1}   
\left|\sum_{m=1}^M  \chi_p(m) \eM(bm)\right|^{2\nu}.
$$

Thus, by~\eqref{eq:chip} we obtain
\begin{equation}
\label{eq:Up}
\sum_{p \sim P} \max_{\gcd(a,p)=1} |S_{p}(a,N_p)|^{2 \nu} \ll
(\log N)^{2\nu-1} \sum_{b=-N}^{N-1} \frac{1}{|b|+1}  U_b, 
\end{equation} 
where
$$
U_b =  \sum_{p \sim P} \left|\sum_{m=1}^M  \chi_p(m) \eM(bm)\right|^{2\nu}.
$$
We now note that 
$$
\(\sum_{m=1}^M  \chi_p(m) \eM(bm)\)^\nu = \sum_{k=1}^{K} \rho_{b,\nu}(k)  \chi_p(k) , 
$$
 where $K = M^\nu$ and 
$$
\rho_{b,\nu}(k) = \sum_{\substack{m_1,\ldots,m_\nu=1\\ m_1\ldots m_\nu = k}}^M 
\eM(b( m_1+\ldots + m_\nu)). 
$$
Using Lemma~\ref{lem:tau chi}, we  write
$$
\(\sum_{m=1}^M  \chi_p(m) \eM(bm)\)^\nu = \sum_{k=1}^{K} \rho_{b,\nu}(k)  
 \frac{1}{\tau_{p^2}( \overline\chi_p)}  \sum_{v=1}^{p^2} \overline\chi_p(v) \epp(kv).
$$
Changing the order of summation, 
by Lemma~\ref{lem:tau size} and the Cauchy inequality, we obtain,
$$ 
\left|\sum_{m=1}^M  \chi_p(m) \eM(bm)\right|^{2\nu} \le
 \sum_{v=1}^{p^2} \left| \sum_{k=1}^{K} \rho_{b,\nu}(k) \epp(kv)\right|^2.
$$
Therefore
$$
U_b =  \sum_{p \sim P}  \sum_{v=1}^{p^2} \left| \sum_{k=1}^{K} \rho_{b,\nu}(k) \epp(kv)\right|^2.
$$
Using the standard bounds on the divisor function, see~\cite[Bound~(1.81)]{IwKow}
we conclude that 
$$
|\rho_{b,\nu}(k)| \le  \sum_{m_1\ldots m_\nu = k} 1 =  k^{o(1)}
$$ 
as $k \to \infty$. 
Hence, we now derive from Lemma~\ref{lem:BZ sieve}
$$
U_b\le \(P^3+ N^\nu +\min\{N^\nu
P^{1/2},N^{\nu/2} P^2 \} \) N^{\nu} P^{o(1)} , 
$$
which after substitution in~\eqref{eq:Up} concludes the proof.  
\end{proof} 



In particular, taking a sufficiently large $\nu$, we obtain a nontrivial bound 
on average of very short sums.

\begin{cor}
\label{cor:Short} For any fixed $\varepsilon> 0$ and  $\delta >0$
there exists $\kappa> 0$ such that for  all 
$p\sim P$ except for $O(P^{1/2 + \delta})$ of them, we have
$$
 \max_{\gcd(a,p)=1} |S_{p}(a,N_p)| \le  N_p p^{-\kappa}, 
$$
for any sequence $N_p \sim N$,  with some $N \ge  P^{\varepsilon}$.
\end{cor}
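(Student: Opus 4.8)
The plan is to deduce Corollary~\ref{cor:Short} from Theorem~\ref{thm:Aver Exp Sum} by a standard counting argument. First I would fix a large integer $\nu$ (to be chosen at the end) and apply Theorem~\ref{thm:Aver Exp Sum} with the given sequence $N_p \sim N$, where $N \ge P^\varepsilon$. The key structural observation is that the maximum on the left-hand side is a sum over $p\sim P$ of non-negative quantities, so I can bound the number of primes for which the summand is large by Markov's inequality: if $\max_{\gcd(a,p)=1}|S_p(a;N_p)| > N_p p^{-\kappa}$ for some prime $p\sim P$, then (since $N_p\sim N\sim p^{\Omega(\varepsilon)}$) that prime contributes at least roughly $N^{2\nu}P^{-2\nu\kappa}$ to the sum. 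Dividing the bound of Theorem~\ref{thm:Aver Exp Sum} by this lower bound per exceptional prime gives an upper estimate on the number of exceptional primes.

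Carrying this out, I would write the right-hand side of the theorem as $\bigl(P^3 + N^\nu + \min\{N^\nu P^{1/2}, N^{\nu/2}P^2\}\bigr)N^\nu P^{o(1)}$ and divide by $N^{2\nu}P^{-2\nu\kappa}$. The three terms then become
$$
P^{3-2\nu\kappa}N^{-\nu}P^{o(1)}, \qquad P^{-2\nu\kappa}P^{o(1)}, \qquad P^{1/2-2\nu\kappa}P^{o(1)},
$$
using the first branch $N^\nu P^{1/2}$ of the minimum for the third term (which is the relevant one when $N$ is small). Since $N\ge P^\varepsilon$, the first term is at most $P^{3-\nu\varepsilon - 2\nu\kappa + o(1)}$, and by choosing $\nu$ large enough that $\nu\varepsilon > 3 - 1/2$ this term drops below $P^{1/2+\delta}$ once $\kappa$ is small; the second term is clearly negligible; the third term is $P^{1/2 - 2\nu\kappa + o(1)}$, which is below $P^{1/2+\delta}$ for every positive $\kappa$ and $\delta$. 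Thus the number of exceptional primes is $O(P^{1/2+\delta})$ for a suitable small fixed $\kappa>0$.

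The main obstacle, and the point requiring a little care rather than deep difficulty, is the bookkeeping around the minimum in the bound and the ranges where each branch dominates. I would need to confirm that choosing $\nu$ large is compatible with having $N$ as small as $P^\varepsilon$: the term $P^{3-\nu\varepsilon}$ is what forces $\nu$ to grow as $\varepsilon$ shrinks, and I must check that this choice keeps the $N^{\nu/2}P^2$ branch from spoiling the estimate. In the regime $N = P^\varepsilon$ with $\varepsilon$ small, one has $N^\nu P^{1/2} \le N^{\nu/2}P^2$ precisely when $N^{\nu/2}\le P^{3/2}$, i.e. $\varepsilon\nu/2 \le 3/2$; for the first branch to be the correct choice I want $\nu$ not too large relative to $1/\varepsilon$, so I would instead keep the full $\min$ and verify that whichever branch is active, the resulting exponent stays below $1/2+\delta$. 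In fact both branches give exponents of the form $1/2 - c\kappa + o(1)$ or lower after using $N\ge P^\varepsilon$, so the argument is robust; the only genuine requirement is $\nu$ large enough to kill the leading $P^3$ term, after which $\kappa$ is taken sufficiently small (depending on $\nu$, hence on $\varepsilon$ and $\delta$) to absorb the $P^{o(1)}$ factor. This yields the claimed $\kappa>0$ and completes the deduction.
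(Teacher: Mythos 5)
Your proposal is the paper's own argument: the paper likewise takes $\nu=\rf{3/\varepsilon}$ (so that $N^\nu \ge P^{\nu\varepsilon}\ge P^3$, collapsing the bracket in Theorem~\ref{thm:Aver Exp Sum} to $N^\nu P^{1/2}$ and the whole moment bound to $N^{2\nu}P^{1/2+o(1)}$) and then counts exceptional primes by exactly the Markov-type argument you describe, with $\kappa\asymp\delta/\nu$. One computational slip to fix: dividing by the per-prime contribution $N^{2\nu}P^{-2\nu\kappa}$ \emph{multiplies} the moment bound by $P^{+2\nu\kappa}$, so your three exponents should read $P^{3+2\nu\kappa}N^{-\nu}$, $P^{2\nu\kappa}$ and $P^{1/2+2\nu\kappa}$ (all with $+2\nu\kappa$, not $-2\nu\kappa$). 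Consequently your claim that the third term is below $P^{1/2+\delta}$ ``for every positive $\kappa$'' is false as written: one needs $2\nu\kappa<\delta$, i.e.\ $\kappa$ small in terms of $\nu$ and $\delta$ --- but this is precisely the smallness condition you impose anyway in your final paragraph, and it matches the paper's choice $\kappa=0.9\delta/\nu$, so the slip does not derail the deduction. Finally, your worry about which branch of the minimum is ``active'' is unnecessary: since $\min\{A,B\}\le A$ unconditionally, you may always bound the minimum by $N^\nu P^{1/2}$ regardless of whether it is the smaller branch, which is what the paper implicitly does; there is no constraint tying $\nu$ from above to $1/\varepsilon$.
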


\begin{proof} Taking $\nu = \rf{3/\varepsilon}$, we obtain from 
Theorem~\ref{thm:Aver Exp Sum}
$$
\sum_{p \sim P} \max_{\gcd(a,p)=1} |S_{p}(a,N_p)|^{2\nu}  \le  
N^{2\nu} P^{1/2+o(1)}.
$$
So for  $\kappa = 0.9\delta/\nu$, we see that the inequality
$$
\max_{\gcd(a,p)=1} |S_{p}(a,N_p)| >  N_p p^{-\kappa}
$$
holds for at most $P^{1/2+\nu \kappa +o(1)}  = O(P^{1/2 + \delta})$
primes $p\sim P$. 
\end{proof} 

Furthermore, taking $\nu=3$ we derive

\begin{cor}
\label{cor:long} For any fixed $\varepsilon> 0$ 
there exists  $\delta >0$ such that for  all 
$p\sim P$ except for $O(P^{1 - \delta})$ of them, we have
$$
 \max_{\gcd(a,p)=1} |S_{p}(a,N_p)| \le  N_p p^{-1/12 + \varepsilon}, 
$$
for any sequence $N_p \sim N$,  with some $N \ge  P^{5/6}$.
\end{cor}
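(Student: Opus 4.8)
The plan is to specialise Theorem~\ref{thm:Aver Exp Sum} to $\nu=3$ and then turn the resulting sixth-moment bound into a statement about individual primes by a Markov-type counting argument.

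First I would record the bound of Theorem~\ref{thm:Aver Exp Sum} with $\nu=3$, namely
$$
\sum_{p\sim P}\max_{\gcd(a,p)=1}|S_{p}(a,N_p)|^{6}
\le\(P^{3}+N^{3}+\min\{N^{3}P^{1/2},N^{3/2}P^{2}\}\)N^{3}P^{o(1)}.
$$
The key observation is that the threshold $N\ge P^{5/6}$ is exactly what makes each of the three terms in the bracket at most $N^{3}P^{1/2}$. Indeed, $P^{3}\le N^{3}P^{1/2}$ is equivalent to $N^{3}\ge P^{5/2}$, that is, to $N\ge P^{5/6}$; trivially $N^{3}\le N^{3}P^{1/2}$; and $\min\{N^{3}P^{1/2},N^{3/2}P^{2}\}\le N^{3}P^{1/2}$ whatever the minimum happens to be. Hence on the whole range $P^{5/6}\le N\le P^{2}$ the bracket is $\ll N^{3}P^{1/2}$, and the sixth moment collapses to the clean uniform estimate
$$
\sum_{p\sim P}\max_{\gcd(a,p)=1}|S_{p}(a,N_p)|^{6}\le N^{6}P^{1/2+o(1)}.
$$

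Next I would pass from this average to individual primes. Call a prime $p\sim P$ \emph{bad} if $\max_{\gcd(a,p)=1}|S_{p}(a,N_p)|>N_p p^{-1/12+\varepsilon}$. Raising to the sixth power and using $N_p\sim N$, $p\sim P$, each bad prime contributes at least $\gg N^{6}P^{-1/2+6\varepsilon}$ to the left-hand side (one uses here that, for small $\varepsilon$, the exponent $-1/2+6\varepsilon$ is negative, so $p^{-1/2+6\varepsilon}$ is decreasing in $p$ and replacing $p$ by $P$ costs only a constant). Comparing with the upper bound $N^{6}P^{1/2+o(1)}$, the number $B$ of bad primes satisfies
$$
B\ll \frac{N^{6}P^{1/2+o(1)}}{N^{6}P^{-1/2+6\varepsilon}}=P^{1-6\varepsilon+o(1)}.
$$
Choosing any $\delta$ with $0<\delta<6\varepsilon$ (for instance $\delta=3\varepsilon$) absorbs the $P^{o(1)}$ factor for all sufficiently large $P$ and yields $B=O(P^{1-\delta})$, which is the assertion.

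I do not expect a genuine obstacle here: the only point requiring care is the verification that the bracket reduces to $N^{3}P^{1/2}$ precisely on the range $N\ge P^{5/6}$, since this is what simultaneously pins down the threshold $P^{5/6}$ and the saving exponent $1/12=\tfrac12\cdot\tfrac16$. One should also keep $N\le P^{2}$ so that Theorem~\ref{thm:Aver Exp Sum} is applicable.
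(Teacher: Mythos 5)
Your proposal is correct and is precisely the argument the paper intends: Corollary~\ref{cor:long} is stated with only the remark ``taking $\nu=3$ we derive,'' and your specialisation of Theorem~\ref{thm:Aver Exp Sum} to $\nu=3$, the verification that $N\ge P^{5/6}$ reduces the bracket to $N^3P^{1/2}$, and the Markov-type count of bad primes (the same device used in the paper's proof of Corollary~\ref{cor:Short}) fill in exactly the omitted details. Your side remarks --- that one may assume $\varepsilon<1/12$ since the claim is trivial otherwise, and that $N\le P^2$ is needed for the theorem to apply --- are the right points of care.
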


We note that the bound of Corollary~\ref{cor:long} is stronger than that 
of~\eqref{eq:HB bound} for $N \le p^{10/11}$. 

\subsection{Image size}
\label{sec:Image}

Here we give some lower bounds on the image size $I_p(N)$.

First we consider the case of large values of $N$, 
for which use an argument of the proof of~\cite[Theorem~13]{OstShp}.

\begin{theorem}
\label{thm:Image 1}
For every $p$ and $N < p$, we have
$$
I_p(N)  \ge (1+o(1))\frac{N^2}{p(\log N)^2}.
$$
\end{theorem}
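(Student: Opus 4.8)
The plan is to exploit the additive structure~\eqref{eq:add struct}, which says $q_p(uv)\equiv q_p(u)+q_p(v)\pmod p$. Via this relation the map $n\mapsto q_p(n)$ behaves like a logarithm: the image of $q_p$ on products inherits the additive combinatorics of its image on the generators. The key observation is that every integer $1\le n<p$ factors into primes $\ell<p$, and for a prime $\ell$ we have $q_p(\ell^k)\equiv k\,q_p(\ell)\pmod p$. More usefully, if two distinct integers $m\ne n$ in $[1,N]$ satisfy $q_p(m)\equiv q_p(n)\pmod p$, then writing $m/n$ in lowest terms and applying~\eqref{eq:add struct} to numerator and denominator shows $q_p(m/n)\equiv 0$, which is a strong multiplicative constraint. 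So I would count coincidences among the values $q_p(n)$ for $1\le n\le N$ by bounding the number of collisions.

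Concretely, the strategy is the standard second-moment (energy) argument for image size: for any function on a set of $N$ inputs,
$$
I_p(N)\ \ge\ \frac{\(\#\{n\le N:\gcd(n,p)=1\}\)^2}{\displaystyle\sum_{v=0}^{p-1}J(v)^2}\cdot\frac{1}{\max_v J(v)}\quad\text{or more simply}\quad I_p(N)\ \ge\ \frac{N^2}{\#\{(m,n): q_p(m)=q_p(n)\}},
$$
where the collision count in the denominator is the number of pairs $m,n\le N$ with $q_p(m)\equiv q_p(n)\pmod p$. First I would write $N_0=\#\{n\le N:\gcd(n,p)=1\}=N+O(1)$ since $N<p$, and reduce the problem to showing that the number of collision pairs is at most $(1+o(1))N(\log N)^2$. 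The target bound then follows immediately from Cauchy--Schwarz applied to the fibers of $q_p$.

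The heart of the matter is bounding the collision count $\#\{(m,n):1\le m,n\le N,\ q_p(m)\equiv q_p(n)\}$. By~\eqref{eq:add struct}, a collision $q_p(m)=q_p(n)$ is equivalent to $q_p(m\,\overline{n})\equiv 0\pmod p$ where $\overline n$ is the inverse of $n$ modulo $p^2$; more transparently, since $q_p(u)+q_p(v)\equiv q_p(uv)$ and $q_p(1)=0$, collisions correspond to solutions of $q_p(r/s)\equiv 0$ with $r,s\le N$. I would set this up as follows: the number of $n\le N$ with a prescribed value $q_p(n)=v$ is governed by the multiplicative relation, and I expect that the diagonal $m=n$ contributes $N$ pairs while off-diagonal collisions are controlled by the fact that $q_p(\ell)$ for the primes $\ell\le N$ are sufficiently ``independent.'' Following the argument of~\cite[Theorem~13]{OstShp}, I would likely reduce to counting representations $m=n\cdot w$ where $q_p(w)\equiv 0$, and show that such multipliers $w$ are rare enough that the total number of off-diagonal pairs is $O(N(\log N)^2)$, with the $(\log N)^2$ arising from summing the divisor-type weights over the two factorizations.

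The main obstacle I anticipate is precisely this collision estimate: proving that solutions to $q_p(w)\equiv 0\pmod p$ inside the relevant range do not cluster, so that the off-diagonal term does not overwhelm the diagonal. This requires a genuine input beyond the formal additivity~\eqref{eq:add struct} --- presumably a bound showing that the number of $n\le N$ lying in any single fiber $q_p^{-1}(v)$ is small, or equivalently that $q_p$ restricted to $[1,N]$ is close to injective on average. I would expect the $(\log N)^2$ in the denominator to emerge naturally from the two independent divisor sums over the numerator and denominator of the ratio $m/n$, and the $(1+o(1))$ to absorb the negligible contributions from non-coprime $n$ and from lower-order terms in the divisor-function estimates. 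Once the collision count is pinned down as $(1+o(1))N(\log N)^2$, the Cauchy--Schwarz lower bound delivers $I_p(N)\ge(1+o(1))N^2/\(p(\log N)^2\)$ after accounting for the fact that the $p$ possible values spread the fibers; I would need to insert the factor $p$ carefully, most likely because the natural normalization counts collisions weighted so that the effective number of target values is $p$ rather than the raw fiber sizes.
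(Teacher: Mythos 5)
Your proposal has the right outer skeleton --- a Cauchy--Schwarz second-moment bound of the form $I_p(N)\ge(\text{first moment})^2/(\text{collision count})$ is exactly how the paper argues --- but it is missing the two ideas that make the paper's proof actually close, and the step you yourself flag as ``the main obstacle'' is a genuine gap, not a technicality. First, the paper does not count collisions among all integers $n\le N$: it restricts to \emph{primes} $\ell\le N$, setting $Q_p(N,a)=\#\{\ell\le N \text{ prime}: q_p(\ell)=a\}$. The first moment is then $\pi(N)=(1+o(1))N/\log N$ by the prime number theorem, and this is the true source of the $(\log N)^2$ in the statement --- not divisor-type weights from two factorizations, as you conjectured. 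Second, and crucially, the paper never proves a short-range collision estimate at all: it uses the trivial monotonicity $\sum_a Q_p(N,a)^2\le\sum_a Q_p(p-1,a)^2$ to pass to the \emph{complete} range, where the second moment over primes is known to be $O(p)$ by the external input~\cite[Bound~(17)]{OstShp}. That $O(p)$ is where the factor $p$ in the denominator comes from, resolving the point you admitted you could not account for (``I would need to insert the factor $p$ carefully'').

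By contrast, the collision bound your plan requires --- at most $(1+o(1))N(\log N)^2$ pairs $m,n\le N$ with $q_p(m)=q_p(n)$, over all integers --- is both unproven and out of reach of the tools in this paper: translating collisions into $m/n\bmod p^2$ lying in the group $\cG_p$ of $p$th power residues (exactly as in the proof of Theorem~\ref{thm:Image 2}) and applying Lemma~\ref{lem:bound NZ} only yields $N p^{1/2\nu(\nu+1)+o(1)}+N^2p^{-1/\nu+o(1)}$ with fixed $\nu$, which carries a positive power of $p$ and is nowhere near $N(\log N)^2$. Moreover, your target bound overshoots: if it were available it would give $I_p(N)\ge(1+o(1))N/(\log N)^2$, far stronger than the theorem (and stronger than Theorem~\ref{thm:Image 2} as well), which is a sign the intermediate claim is not the right one to aim for. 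A smaller issue: your first displayed inequality, with the extra factor $1/\max_v J(v)$ multiplying the ratio of moments, is not a correct form of the Cauchy--Schwarz image bound as written, though the second, simpler form is fine. In sum, the proposal reduces the theorem to a collision estimate it cannot supply, whereas the paper sidesteps that estimate entirely via primes, monotonicity to the full period, and the known full-period second moment.
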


\begin{proof}
Let $Q_p(N,a)$   be the number of primes $\ell \in \{1, \ldots, N\}$
with $q_p(\ell) = a$. 
Clearly, by the prime number theorem
$$
\sum_{a=0}^{p-1} Q_p(N,a) = (1+o(1))\frac{N}{\log N}.
$$
We now use the trivial estimate 
$$
\sum_{a=0}^{p-1} Q_p(N,a)^2  \le \sum_{a=0}^{p-1} Q_p(p-1,a)^2
$$ 
and recall that by~\cite[Bound~(17)]{OstShp} the last sum is $O(p)$.

Since by the Cauchy  inequality we  have 
$$
\(\sum_{a=0}^{p-1} Q_p(N,a)\)^2 \le I_p(N)  \sum_{a=0}^{p-1} Q_p(N,a)^2, 
$$
the result now follows. 
\end{proof}

For small values of $N$ we use an argument similar to that 
of the proof of~\cite[Theorem~11]{OstShp}.

\begin{theorem}
\label{thm:Image 2}
For every $p$ and arbirary fixed $\varepsilon>0$ there exists some $\delta > 0$ such that 
for    $p^\varepsilon <N < p$ we have
$$
I_p(N)  \ge p^\delta.
$$
\end{theorem}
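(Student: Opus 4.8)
The plan is to prove a lower bound of the form $I_p(N)\ge p^\delta$ for $p^\varepsilon < N < p$ by exploiting the additive structure~\eqref{eq:add struct} of Fermat quotients, just as the large-value regime in Theorem~\ref{thm:Image 1} does. The idea is that if the image were small, then the additivity $q_p(uv)\equiv q_p(u)+q_p(v)\pmod p$ would force many multiplicative relations among integers up to $N$, and these can be counted and bounded. Concretely, write $\cV = \{q_p(n) : 1\le n\le N,\ \gcd(n,p)=1\}$, so $\#\cV = I_p(N)$. The key observation is that if $n_1,n_2,n_3,n_4 \le N$ satisfy $q_p(n_1)+q_p(n_2)\equiv q_p(n_3)+q_p(n_4)\pmod p$, then by~\eqref{eq:add struct} we have $q_p(n_1 n_2)\equiv q_p(n_3 n_4)\pmod p$, i.e. $q_p(n_1n_2 / (n_3 n_4))\equiv 0$ after passing to fractions.

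First I would set up a standard second-moment (energy) argument. Let $J(\cV)$ denote the number of quadruples $(n_1,n_2,n_3,n_4)$ with all $n_i\le N$ and $q_p(n_1)+q_p(n_2)\equiv q_p(n_3)+q_p(n_4)\pmod p$. By Cauchy--Schwarz, $J(\cV)\ge N^4/(p\, I_p(N))$ — more precisely one relates the additive energy of the value set to $I_p(N)$ via the usual inequality $(\sum_a r(a))^2 \le (\#\text{support})\sum_a r(a)^2$, where $r(a)=\#\{n\le N: q_p(n)=a\}$. So a small image $I_p(N)$ forces a large energy $J(\cV)$. On the other hand, I would bound $J(\cV)$ from above by reducing, via the additive structure, to counting solutions of a multiplicative congruence $n_1 n_2 \equiv n_3 n_4 \pmod{p^2}$ (using that $q_p$ is determined modulo $p$ by $u^{p-1}\bmod p^2$, so the additive collision lifts to a multiplicative relation modulo $p^2$) with all variables of size at most $N$. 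This is precisely the type of object controlled by Lemma~\ref{lem:bound NZ}: counting ratios $w x\equiv y\pmod m$ with small height, where here $m=p^2$ and $\cG$ is the relevant multiplicative subgroup (the group generated by residues whose Fermat quotient vanishes, which has size a multiple of $p$). Applying Lemma~\ref{lem:bound NZ} with $m=p^2$, $t=\#\cG \gg p = \sqrt m$, and $Z$ of order $N$ gives an upper bound $J(\cV)\le N^2 p^{c_1/\nu + o(1)} + N^4 p^{-2/(\nu+1)+o(1)}\cdot(\text{subgroup factors})$ for each fixed $\nu$.

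Combining the lower and upper bounds on $J(\cV)$ then yields $N^4/(p\,I_p(N)) \le J(\cV) \le (\text{Lemma~\ref{lem:bound NZ} bound})$, which rearranges to $I_p(N) \ge N^4 p^{-1}\big/(\text{upper bound})$. For $N\ge p^\varepsilon$ and $\nu$ chosen large enough in terms of $\varepsilon$, the saving $p^{-c/\nu}$ in Lemma~\ref{lem:bound NZ} becomes negligible compared to the power $N^4/p$, so the quotient grows like a fixed positive power $p^\delta$, where $\delta$ depends on $\varepsilon$ (and the choice of $\nu$). I expect the main obstacle to be the correct translation step: verifying that an additive collision $q_p(n_1)+q_p(n_2)\equiv q_p(n_3)+q_p(n_4)$ genuinely corresponds to a multiplicative congruence modulo $p^2$ that the subgroup $\cG$ and Lemma~\ref{lem:bound NZ} can count, and in particular confirming the hypothesis $\#\cG\gg\sqrt{p^2}=p$ needed to invoke that lemma. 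Once this reduction is pinned down, the optimisation of $\nu$ to extract a positive $\delta$ from the regime $N\ge p^\varepsilon$ is routine.
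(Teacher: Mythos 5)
Your reduction of collisions among Fermat quotients to a multiplicative congruence modulo $p^2$ is exactly the paper's key step: by~\eqref{eq:add struct}, an additive coincidence forces $u\equiv wv \pmod{p^2}$ with $w$ in the group $\cG_p$ of solutions of $w^{p-1}\equiv 1 \pmod{p^2}$, and Lemma~\ref{lem:bound NZ} counts such solutions. (A small slip: that group is the subgroup of $p$th powers modulo $p^2$ and has order exactly $p-1$, not ``a multiple of $p$''; this still meets the hypothesis $t\gg\sqrt m$, as you note.) The genuine gap is in your Cauchy--Schwarz step. The lower bound $J(\cV)\ge N^4/(p\,I_p(N))$ is true but fatally lossy: you divide by $p$, the size of the ambient group, rather than by the support of the relevant representation function. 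Your upper bound for $J(\cV)$ necessarily contains the term $N^4 p^{-1/\nu+o(1)}$ (the second term of Lemma~\ref{lem:bound NZ} with $Z=N^2$, $t\approx p$, $m=p^2$), so the rearranged bound is at most
$$
I_p(N) \ \ge \ \frac{N^4p^{-1}}{N^4 p^{-1/\nu+o(1)}} \ = \ p^{1/\nu-1+o(1)} \ < \ 1
$$
for every $\nu\ge 2$; likewise the first term yields only $N^2p^{-1-1/2\nu(\nu+1)+o(1)}$, which is below $1$ throughout $N\le p^{1/2}$. So as written your combination produces no positive $\delta$ anywhere in the range $p^\varepsilon<N\le p^{1/2}$, which is precisely the regime the theorem is about.

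The repair is to let the support in Cauchy--Schwarz be controlled by $I_p(N)$ itself. Either keep quadruples and use $J(\cV)\ge N^4/\#(\cV+\cV)\ge N^4/I_p(N)^2$, after which your scheme does close (with the divisor-function multiplicity $N^{o(1)}$ absorbed in passing from $(n_1,n_2,n_3,n_4)$ to the products $(n_1n_2,\,n_3n_4)$, one gets $I_p(N)^2\ge \min\{N^2p^{-1/2\nu(\nu+1)},\,p^{1/\nu}\}^{1+o(1)}$ for $\nu$ large in terms of $\varepsilon$); or, as the paper does, avoid quadruples entirely and run the second moment on pairs: with $R_p(N,a)$ the number of $n\le N$ with $q_p(n)=a$, one has $N^2=\bigl(\sum_a R_p(N,a)\bigr)^2\le I_p(N)\sum_a R_p(N,a)^2$, while $\sum_a R_p(N,a)^2=\#\cW_p(N)\le N(p^2,\cG_p,N)$, to which Lemma~\ref{lem:bound NZ} applies directly with $Z=N$, giving $\#\cW_p(N)\le N^{3/2+o(1)}+N^2p^{-1/\nu+o(1)}$ once $\nu$ satisfies $1/2\nu(\nu+1)\le\varepsilon/2$, hence $I_p(N)\ge\min\{N^{1/2},p^{1/\nu}\}^{1+o(1)}\ge p^\delta$. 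Note that your own parenthetical inequality $\bigl(\sum_a r(a)\bigr)^2\le(\#\mathrm{supp})\sum_a r(a)^2$ is exactly this pair version; had you applied it consistently instead of the quadruple energy with support $p$, you would have recovered the paper's argument.
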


\begin{proof} 
Let 
$$
\cW_p(N) = \{(u,v)\ : \ 1 \le u,v \le N, \ q_p(u) = q_p(v)\}.
$$
We   see from~\eqref{eq:add struct} 
that if $(u,v) \in \cW_p(N)$ 
then for
\begin{equation}
\label{eq:wuv}
w\equiv u/v \pmod {p^2}
\end{equation}
we have
$$
q_p(w) \equiv q_p(u) - q_p(v)   \equiv 0 \pmod p.
$$ 

Since  all values of $w$ with $q_p(w)   \equiv 0 \pmod p$ and $\gcd(w,p)=1$
satisfy 
$$
w^{p-1} \equiv 1 \pmod {p^2},
$$
they are elements of the group $\cG_p$ of the
$p$th power residues modulo $p^2$. 
Thus we see from~\eqref{eq:wuv} that  
$$
\# \cW(p) \le N(p^2, \cG_p, N), 
$$
where $N(m,\cG, Z)$ is as in Section~\ref{sec:rats}.
In particular, using Lemma~\ref{lem:bound NZ}  gives 
$$\# \cW(p) \le  
N  p^{1/2\nu(\nu+1) + o(1)}
+ N^2 p^{-1/\nu + o(1)}.
$$
Taking $\nu$ as the smallest integer that  satisfies the inequality,
$$
\frac{1}{2\nu(\nu+1)} \le \frac{\varepsilon}{2},
$$
we obtain for this $\nu$, 
$$
\# \cW(p) \le  N^{3/2+o(1)} + N^2 p^{-1/\nu + o(1)}.
$$
Since
$$
\sum_{a=0}^{p-1} R_p(N,a) = N \mand 
\sum_{a=0}^{p-1} R_p(N,a)^2 = \# \cW(p),
$$
where $R_p(N,a)$ is the the number of positive integers $n \le N$
with $q_p(n) = a$, as in the proof of Theorem~\ref{thm:Image 1},
using the Cauchy inequality we derive the desired result.
 \end{proof}

\subsection{Smallest primitive roots}

Our bound on the smallest 
primitive roots among the values of the Fermat quotients
is based on the congruence~\eqref{eq:add struct}, 
Lemma~\ref{lem:double} and the results of Sections~\ref{sec:Image}

\begin{theorem}
\label{thm:Nonres}
For every $p$, there exists
$n \le p^{3/4 + o(1)}$ such that $q_p(n)$ is a 
primitive root modulo $p$. 
\end{theorem}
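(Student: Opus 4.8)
\textbf{Proof strategy for Theorem~\ref{thm:Nonres}.}
The plan is to use the characterisation of primitive roots from Lemma~\ref{lem:prim root} together with the additive structure~\eqref{eq:add struct} to count, for a suitable parameter $N$, the number of $n \le N$ for which $q_p(n)$ is a primitive root modulo $p$. Write this count as
$$
W = \frac{p-1}{\varphi(p-1)}\sum_{d \mid p-1} \frac{\mu(d)}{\varphi(d)}
\sum_{\ord \eta = d} \sum_{\substack{n=1\\\gcd(n,p)=1}}^N \eta(q_p(n)).
$$
The principal character ($d=1$) contributes a main term proportional to the number of $n \le N$ with $q_p(n)$ lying in the image of $q_p$; crucially this is governed by $I_p(N)$, since each attained value is counted with its multiplicity and the main term is essentially $I_p(N)$ divided by the average multiplicity. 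The goal is to show that the contribution of the nontrivial characters ($d>1$) is smaller than this main term, which forces $W > 0$ and hence produces a primitive root among the first $N$ Fermat quotients.

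First I would isolate the main term from the $d=1$ summand, showing it is $\gg I_p(N)/(\text{something})$, and then invoke Theorem~\ref{thm:Image 2} (for $N$ a small power of $p$) to guarantee that $I_p(N) \ge p^\delta$ grows. The heart of the argument is bounding the nontrivial character sums $\sum_n \eta(q_p(n))$. Rather than using the one-dimensional bound~\eqref{eq:char sum bound} directly, I would exploit~\eqref{eq:add struct} to convert these into \emph{double} character sums. Specifically, writing $n = uv$ and using $q_p(uv) \equiv q_p(u) + q_p(v) \pmod p$, one can factor the range $1 \le n \le N$ through products $uv$ with $u,v$ ranging over appropriately chosen sets, so that
$$
\sum_{n} \eta(q_p(n)) \approx \sum_{u}\sum_{v} \eta(q_p(u) + q_p(v)).
$$
Then Lemma~\ref{lem:double}, applied with $\cA = \{q_p(u)\}$ and $\cB = \{q_p(v)\}$ (as multisets, or after accounting for multiplicities via $I_p$), yields a power-saving bound of the form $(\#\cA)^{1-1/2\nu}\#\cB\, p^{1/4\nu} + (\#\cA)^{1-1/2\nu}(\#\cB)^{1/2} p^{1/2\nu}$, uniformly over all nontrivial $\eta$.

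\textbf{The main obstacle.}
The delicate point will be the bookkeeping that relates the sizes $\#\cA$, $\#\cB$ of the value-sets entering Lemma~\ref{lem:double} to the original length $N$, while correctly tracking the multiplicities $R_p(N,a)$ with which each value $a$ is attained. Because Lemma~\ref{lem:double} counts sets rather than weighted sums, one must either pass to the distinct values (losing a factor controlled by the second-moment bound $\sum_a R_p(N,a)^2 = \#\cW(p)$ from the proof of Theorem~\ref{thm:Image 2}) or split the sum dyadically according to multiplicity. Optimising the exponent $\nu$ in Lemma~\ref{lem:double} against the parameters of this factorisation, and summing the resulting bounds over all $d \mid p-1$ against the weights $(p-1)/\varphi(p-1) \cdot \mu(d)/\varphi(d)$ (using that $\sum_{d\mid p-1} 1/\varphi(d) = O(p^{o(1)})$ and that the number of characters of order $d$ is $\varphi(d)$), is where the exponent $3/4$ must emerge. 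Balancing the main term against the error will dictate the choice $N = p^{3/4 + o(1)}$, so that the nontrivial contribution is $o$ of the main term precisely at this threshold; verifying that this balance is achievable, and that $N$ stays in the admissible range of Theorem~\ref{thm:Image 2}, is the crux of the proof.
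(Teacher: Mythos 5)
There is a genuine gap, and it is quantitative, not just bookkeeping. Your plan secures the value sets only through Theorem~\ref{thm:Image 2}, which gives $I_p(N)\ge p^{\delta}$ for some small unspecified $\delta$. But Lemma~\ref{lem:double} is \emph{trivial} unless one of the two sets is larger than $p^{1/2}$: comparing the first term with the trivial bound, one has $(\#\cA)^{1-1/2\nu}\#\cB\, p^{1/4\nu}\le \#\cA\,\#\cB$ if and only if $\#\cA\ge p^{1/2}$, for \emph{every} $\nu$, so no amount of optimising $\nu$ or dyadic splitting can manufacture a saving from two sets of size $p^{\delta}$. The paper's proof hinges on Theorem~\ref{thm:Image 1}, which your proposal never invokes: applied with $U=\rf{p^{3/4+\varepsilon}}$ it gives $I_p(U)\gg U^2/(p(\log U)^2)\ge p^{1/2+\delta}$, and the requirement $U^2/p>p^{1/2}$, i.e.\ $U>p^{3/4}$, is precisely where the exponent $3/4$ comes from --- not from balancing a main term against an error term at a threshold, as you conjecture. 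The second, short set comes from Theorem~\ref{thm:Image 2} with $V=\rf{p^{\varepsilon}}$; once $\#\cA\ge p^{1/2+\delta}$ and $\#\cB\ge p^{\delta}$, taking $\nu$ large makes both terms of Lemma~\ref{lem:double} give the power saving $\#\cU\,\#\cV\,p^{-\kappa}$, and then $n=uv\le UV\le 2p^{3/4+2\varepsilon}$.

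The other problematic step is your reduction $\sum_{n\le N}\eta(q_p(n))\approx\sum_u\sum_v\eta\(q_p(u)+q_p(v)\)$ by ``factoring the range $1\le n\le N$'': an initial interval is not a product set, and no such approximate factorisation exists (this is exactly the multiplication-table obstruction), so this step would fail as stated. The paper avoids it entirely by never estimating the interval sum: it applies Lemma~\ref{lem:prim root} directly to the numbers $q_p(u)+q_p(v)$ with $u\in\cU\subseteq\{1,\ldots,U\}$, $v\in\cV\subseteq\{1,\ldots,V\}$, where $\cU$ and $\cV$ are chosen so that the values $q_p(u)$, respectively $q_p(v)$, are pairwise distinct --- which also dissolves your ``main obstacle'' about multisets and multiplicities, since one simply keeps one representative $u$ per attained value and applies Lemma~\ref{lem:double} to genuine sets of cardinalities $I_p(U)$ and $I_p(V)$. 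Having shown $T>0$, so that some $q_p(u)+q_p(v)$ is a primitive root, one only then uses~\eqref{eq:add struct} to conclude that $q_p(n)$ is a primitive root for the single integer $n=uv\le p^{3/4+2\varepsilon}$; the theorem asserts existence of one such $n$, so no control over all $n\le N$ is needed, and your ``main term governed by $I_p(N)$ divided by the average multiplicity'' never has to be made precise.
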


\begin{proof} Let us fix some $\varepsilon>0$ and put 
$$
U = \rf{p^{3/4 + \varepsilon}} \mand V = \rf{p^{\varepsilon}}.
$$ 
By Theorem~\ref{thm:Image 1} and Theorem~\ref{thm:Image 2}, we
have
$$
I_p(U) \ge p^{1/2 +\delta} \mand I_p(V) \ge  p^{\delta}, 
$$ 
respectively. 
Furthermore, let $\cU\subseteq\{1, \ldots, U\}$ and $\cV\subseteq\{1, \ldots, V\}$ 
satisfy $\# \cU = \# \{q_p(u)\ : \ u \in \cU\} = I_p(U)$ and 
$\# \cV = \# \{q_p(v)\ : \ v \in \cV\} = I_p(V)$,
respectively.

Taking a sufficiently large $\nu$ in Lemma~\ref{lem:double} we obtain
that there is some $\kappa > 0$ depending only on $\delta$ (and thus only
on $\varepsilon$) such that 
for any nontrivial character $\eta$ 
modulo $p$, we   have
\begin{equation}
\label{eq:bound}
\sum_{u \in \cU} \sum_{v \in \cV}  \eta\(q_p(u)+q_p(v)\)  \ll \# \cU  \#\cV p^{-\kappa}.
\end{equation}
Let $T$ be the number of primitive roots of the form $q_p(u)+q_p(v)$ with
$u \in \cU$ and $v \in \cV$. By Lemma~\ref{lem:prim root}, we have
\begin{eqnarray*}
T & = & \frac{p-1}{\varphi(p-1)} \sum_{u \in \cU} \sum_{v \in \cV} \sum_{d \mid p-1} \frac{\mu(d)}{\varphi(d)} \sum_{\ord \eta = d}  \eta\(q_p(u)+q_p(v)\) \\
 & = &\frac{p-1}{\varphi(p-1)} \sum_{d \mid p-1} \frac{\mu(d)}{\varphi(d)}
 \sum_{\ord \eta = d}  \sum_{u \in \cU} \sum_{v \in \cV} \eta\(q_p(u)+q_p(v)\) .
\end{eqnarray*}
Separating the contribution  $(p-1)  \# \cU  \#\cV/\varphi(p-1)$ 
of the principal character and using~\eqref{eq:bound}, we derive
$$
T = \frac{(p-1)  \# \cU  \#\cV}{\varphi(p-1)} \(1 + p^{-\kappa}\sum_{d \mid p-1}1 \).
$$
Recalling  
the well-known estimates
on the divisor  function
$$
\sum_{d \mid s}1 
 = s^{o(1)}   
$$
as $s \to \infty$,
see~\cite[Theorem~317]{HW},  we obtain that $T > 0$.
Thus there are $u \in \cU$ and $v \in \cV$ such that $q_p(u)+q_p(v)$ is a primitive 
root modulo $p$. 
Recalling~\eqref{eq:add struct} we see that for $n = uv\le UV \le  2p^{3/4 + 2\varepsilon}$ the Fermat quotient
$q_p(n)$ is a primitive root modulo $p$. 

Since $\varepsilon$ is arbitrary, the result now follows. 
\end{proof}

In particular, we see that for any $d \mid p-1$ there is a small $d$th power 
nonresidue modulo $p$ (that is, an integer which is not a dth power modulo $p$)
 among  Fermat quotients. 

\begin{cor}
\label{cor:d nonres} For every $p$ and positive integer $d \mid p-1$ there exists 
$n \le p^{3/4+ o(1)}$ such that $q_p(n)$ is a  $d$th power 
nonresidue modulo $p$.
\end{cor}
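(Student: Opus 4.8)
The plan is to deduce Corollary~\ref{cor:d nonres} from Theorem~\ref{thm:Nonres} by observing that the \emph{same} argument, with only a cosmetic weakening, produces a $d$th power nonresidue. The key point is that any primitive root modulo $p$ is automatically a $d$th power nonresidue for every $d \mid p-1$ with $d>1$ (and the case $d=1$ is vacuous since every integer is a first power), so in principle the statement already follows immediately from Theorem~\ref{thm:Nonres}. However, to make the corollary worth stating I would instead re-run the proof of Theorem~\ref{thm:Nonres} but replace the primitive-root detector of Lemma~\ref{lem:prim root} with the corresponding indicator for $d$th power nonresidues, which is easier and requires characters only of order dividing $d$ rather than the full M\"obius-weighted sum over all $d \mid p-1$.

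First I would recall the standard orthogonality identity: for $\gcd(a,p)=1$, the quantity $\frac{1}{d}\sum_{\eta^d = \eta_0} \eta(a)$ equals $1$ if $a$ is a $d$th power residue and $0$ otherwise, where the sum runs over the $d$ characters $\eta$ modulo $p$ with $\eta^d$ trivial. Hence the number $T_d$ of pairs $(u,v) \in \cU \times \cV$ for which $q_p(u)+q_p(v)$ is a $d$th power nonresidue can be written as
$$
T_d = \#\cU\,\#\cV - \frac{1}{d}\sum_{\eta^d = \eta_0} \sum_{u \in \cU}\sum_{v \in \cV} \eta\(q_p(u)+q_p(v)\).
$$
Separating the contribution $\#\cU\,\#\cV/d$ of the principal character leaves $(1-1/d)\#\cU\,\#\cV$ as a main term, and the remaining $d-1$ nontrivial characters are each bounded using~\eqref{eq:bound}, exactly as in the proof of Theorem~\ref{thm:Nonres}. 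With the sets $\cU$ and $\cV$ chosen as before via Theorem~\ref{thm:Image 1} and Theorem~\ref{thm:Image 2}, each nontrivial term is $O(\#\cU\,\#\cV\, p^{-\kappa})$, so the total error is $O(d\,\#\cU\,\#\cV\,p^{-\kappa})$.

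Since $d \le p-1$, the error is trivially $O(\#\cU\,\#\cV\,p^{1-\kappa})$, which is not yet enough; the clean way to control it is to note $d = p^{o(1)}$ is \emph{not} available in general, but because the main term carries the factor $(1-1/d)$ and the number of nontrivial characters is $d-1 < d$, one only needs $d\, p^{-\kappa} \to 0$, which holds whenever $d \le p^{\kappa/2}$. For larger $d$ a primitive root is a $d$th power nonresidue outright, so the conclusion of Theorem~\ref{thm:Nonres} already supplies the required $n \le p^{3/4+o(1)}$. Thus I would split into the two ranges $d \le p^{\kappa/2}$ and $d > p^{\kappa/2}$, invoking the modified count in the first range and Theorem~\ref{thm:Nonres} directly in the second. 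The main obstacle is therefore purely bookkeeping: making sure the $d$-dependence of the error term is absorbed, which is why the two-range splitting (or simply quoting that a primitive root is a nonresidue of every order) is the cleanest route. Recalling~\eqref{eq:add struct} once more, $n = uv \le UV \le 2p^{3/4+2\varepsilon}$, and letting $\varepsilon \to 0$ gives the claimed bound $n \le p^{3/4+o(1)}$.
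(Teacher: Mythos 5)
Your proposal is correct, and its opening observation is in fact the paper's entire proof: Corollary~\ref{cor:d nonres} is stated with no separate argument, as an immediate consequence of Theorem~\ref{thm:Nonres}, because for $d \mid p-1$ with $d>1$ any primitive root modulo $p$ is automatically a $d$th power nonresidue (its discrete logarithm is coprime to $p-1$, hence not divisible by $d$), while for $d=1$ there is nothing to prove. Your additional re-run of the argument with the detector $\frac{1}{d}\sum_{\eta^d=\eta_0}\eta(a)$ is therefore redundant, as your own two-range split tacitly concedes: in the range $d>p^{\kappa/2}$ you invoke exactly the direct deduction from Theorem~\ref{thm:Nonres}, and that deduction works verbatim for every $d>1$ with no restriction, so the range $d\le p^{\kappa/2}$ never actually needs the modified count. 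If you nevertheless keep it, note one untreated point: the orthogonality identity detects $d$th power residues only for $\gcd(a,p)=1$, so pairs with $q_p(u)+q_p(v)\equiv 0 \pmod p$ contribute $0$ to every character sum (including the principal one), and your formula would then count such pairs as nonresidues; they must be excluded or bounded separately (the proof of Theorem~\ref{thm:Nonres} silently carries the same convention when it separates the principal-character contribution as $\#\cU\,\#\cV$). Since the final bound $n \le p^{3/4+o(1)}$ is identical either way, the one-line deduction is the cleaner route, and it is the paper's.
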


\section{Comments}
%

We remark that a full analogue of~\eqref{eq:HB bound} 
also holds for sums over shifted intervals $[L+1, L+N]$ 
uniformly over $L$. However, the method of proof of 
Theorem~\ref{thm:Aver Exp Sum} applies only to initial intervals. 

We also notice that taking $\nu$ slowly growing with $p$ 
and estimating $\rho_{b,\nu}(k)$ more carefully, as in~\cite{Gar},
one can obtain bounds on average of even shorter sums than in 
Corollary~\ref{cor:Short}. Also as in~\cite{Gar}, one can 
estimate short exponential sums of Fermat quotients 
taken over primes $\ell$ rather than over consecutive 
integers. Note that no ``individual'' bound of such sums 
is known. 

Clearly, our results apply to  sums of arbitrary primitive
characters modulo $p^2$ giving, for every fixed integer  $\nu \ge 1$,
the bound
\begin{equation}
\label{eq:Gen Bound}
\sum_{p \sim P} \max_{\chi\in \cX_p^*} \left| \sum_{n=1}^{N_p} \chi(n)\right|^{2\nu}  \le  
 \(P^3+ N^\nu +\min\{N^\nu
P^{1/2},N^{\nu/2} P^2 \} \) N^{\nu} P^{o(1)}, 
\end{equation} 
where $\cX_p^*$ is the set of primitive multiplicative characters modulo $p^2$, 
for any sequence $N_p \sim N$, as $p\to \infty$.
 Using the results of~\cite{BaZha1} one 
can also obtain similar (albeit weaker) estimates modulo $p^k$ 
with an arbitrary  fixed $k\ge 2$.

Furthermore, it has been conjectured by Zhao~\cite{Zhao} (see also~\cite{BaZha2})
that the bound of Lemma~\ref{lem:BZ sieve} holds in the form
$$
\sum_{1 \le r \le R} \sum_{\substack{a =1\\ \gcd(a,r) =1}}^{r^2}
\left|T(a/r^2)\right|^2 \ll
(RK)^{\eps}\( R^3+ K \) A
$$
(which corresponds to shape of the classical large sieve inequality). 
In this case we obtain that for any fixed $\varepsilon> 0$ 
there exists  $\delta >0$ such that for  all 
$p\sim P$ except for $O(P^{1 - \delta})$  the bound 
$$
 \max_{\gcd(a,p)=1} |S_{p}(a,N_p)| \le  N^{1/2} p^{1/3 + \varepsilon}, 
$$
holds  for any sequence $N_p \sim N$,  with some $N \ge  p$, 
which is stronger than~\eqref{eq:HB bound}. 

Finally, we notice that although Theorem~\ref{thm:Nonres}
shows the existence of small  primitive roots modulo $p$, 
the only known bound of  multiplicative character
sums~\eqref{eq:char sum bound} is nontrivial only for 
$N \ge p^{5/4+\varepsilon}$. Estimating shorter sums, either 
for every $p$
or on average over $p$,
is an interesting open question.

\section*{Acknowledgement}

The author is  grateful to  
Arne Winterhof for the careful reading of the 
manuscript and many useful discussions. 

During the preparation of this paper, the author 
was supported in part by the  Australian Research Council 
Grant~DP1092835.

\end{document}